\begin{document}
\title[Distributed alternating gradient descent over a network]{Distributed alternating gradient descent for convex semi-infinite programs over a network}

\author[A. Aravind]{Ashwin Aravind\orcidlink{0000-0002-6412-5772}} 
\author[D. Chatterjee]{Debasish Chatterjee\orcidlink{0000-0002-1718-653X}}
\author[A. Cherukuri]{Ashish Cherukuri\orcidlink{0000-0002-7609-5080}} 
\thanks{This work was partially supported by Ministry of Human Resource Development, Govt. of India. D. Chatterjee acknowledges partial support of the SERB MATRICS grant MTR/2022/000656. A. Aravind and D. Chatterjee are with the Centre for Systems and Control Engineering, Indian Institute of Technology Bombay, India. (e-mails: (a.aravind, dchatter)@iitb.ac.in). A. Cherukuri is with the Engineering and Technology Institute Groningen, University of Groningen, Groningen, The Netherlands (e-mail: a.k.cherukuri@rug.nl).}

\begin{abstract}
This paper presents a first-order distributed algorithm for solving a convex semi-infinite program (SIP) over a time-varying network. In this setting, the objective function associated with the optimization problem is a summation of a set of functions, each held by one node in a network. The semi-infinite constraint, on the other hand, is known to all agents. The nodes collectively aim to solve the problem using local data about the objective and limited communication capabilities depending on the network topology. Our algorithm is built on three key ingredients: consensus step, gradient descent in the local objective, and local gradient descent iterations in the constraint at a node when the estimate violates the semi-infinite constraint. The algorithm is constructed, and its parameters are prescribed in such a way that the iterates held by each agent provably converge to an optimizer. That is, as the algorithm progresses, the estimates achieve consensus, and the constraint violation and the error in the optimal value are bounded above by vanishing terms. Simulation examples illustrate our results.
\end{abstract}

\maketitle
\allowdisplaybreaks
\section{Introduction}
\label{sec:introduction}

A semi-infinite program (SIP) is a mathematical optimization problem featuring an objective function and a feasible set described by infinitely many inequalities. The presence of an infinite number of constraints makes the task of numerically solving SIPs challenging. These problems are encountered in a variety of fields, see~\cite{MC-SG:18} for a detailed discussion. Applications include, for example, robotics and control~\cite{EP-DQM-DMS:84}, finance~\cite{AG-MD-CZ:24}, and other areas~\cite{ABT-LEG-ASN:09,DB-DB-CC:10}. These problems are particularly challenging to solve due to the presence of possibly uncountable number of constraints. Several numerical methods are proposed to solve SIPs exactly or approximately, see surveys~\cite{RH-KOK:93,MAL-GS:2007, MAG-MAL:18} for detailed expositions. A majority of the work on numerical solutions to SIPs focuses on centralized approaches. However, advancements in communication technologies as well as increased emphasis on large-scale network-centric systems have facilitated the need for distributed algorithms for solving optimization problems~\cite{GN-IN-AC:19}. Naturally, applications exist for distributed frameworks where there are an infinite number of constraints. For example, robust distributed control problems~\cite{YK-JPH:11, YW-CM:22} can be reformulated as distributed SIPs. Similarly, the authors in~\cite{AC-AZ-GB-ARH:22} study the distributionally robust optimization over a network as a distributed SIP. In such settings, the difficulties associated with distributed computing and the infinite number of constraints compound each other, thus demanding specialized algorithms. Our objective in this article is to design a distributed algorithm for solving semi-infinite programs (SIPs) with convex data, where the objective function is a sum of local functions, each privately known to its corresponding agent in a network, while the semi-infinite constraint is shared globally among all agents. 

\subsection{Related works} Distributed SIPs have appeared in few works and existing methods can be mainly categorized as  cutting-plane methods, and scenario-based methods. In this article, we introduce a third category based on first-order methods. Below we briefly review representative works in each category and highlight the distinctions relative to our approach.

Cutting-plane methods construct outer approximations of the feasible set of the problem; this set may be defined by infinitely many constraints. A distributed cutting-plane approach was developed in~\cite{MB-GN-FA:12} for optimization problems with a common linear objective and node-specific semi-infinite constraints. A combination of cutting-plane ideas and distributed ADMM was proposed in~\cite{AC-AZ-GB-ARH:22} for a related problem structure. In both cases, convergence guarantees are asymptotic with no known explicit convergence rates. Moreover, the number of constraints maintained at each agent can grow with iterations, leading to significant computational and memory burdens. In contrast, our approach offers explicit convergence rate guarantees for both objective suboptimality and feasibility violation while requiring each agent to store only local data and a current estimate. A cutting-plane-based scheme with convergence rate guarantees comparable to our method was presented in~\cite{JF-XW:23}, along with finite-time termination using zeroth-order stopping conditions. Unlike~\cite{JF-XW:23}, which requires potentially expensive projection operations onto constraint sets, our method circumvents this challenge by relying on first-order information of the constraint function to maintain feasibility.

Scenario-based~\cite{MC-SG:18} methods, such as those explored in~\cite{LC-VS-FB-GCC:14, KM-AF-SG-MP:18, KY-RT-PX:19}, rely on random sampling from the constraint index set to create a finitely described outer approximation of the feasible set. Convergence guarantees for these methods are probabilistic and typically asymptotic, and the absence of convergence rates at the same level of granularity limits their applicability. Furthermore, achieving feasibility with high-probability requires a large number of samples especially in high dimensions, resulting in substantial memory and computation costs. Finally,~\cite{XW-JF:23} developed a finite-time convergence scheme for distributed SIPs based on a distributed upper and lower bounding procedure. While this method achieves finite-time convergence guarantees, but it requires solving two finitely constrained distributed optimization problems at each iteration, which is challenging.

In summary, compared to existing approaches, our method provides explicit convergence guarantees, maintains low memory and computational overhead, and utilizes only first-order information without requiring projection based on the constraint function. We note that random projection methods, introduced in~\cite{SL-AN:13,SL-AN:16}, could be adapted to solve SIPs. However, such an adaptation would require computing expensive projections at each iteration and would only provide asymptotic convergence, without any guarantees on the rate of convergence for either optimality or feasibility.

\subsection{Contributions} Our approach combines the standard first-order distributed optimization framework~\cite{AN-AO:09} with the alternating gradient descent schemes for constrained optimization problems~\cite{AB-ABT-NGB-LT:10, BW-WBH-SZ:20}, where the latter set of algorithms employ gradient descent either for the objective or the constraint function depending on the feasibility or lack of it, respectively, of the iterate. Our algorithm is built on three blocks: (a)  a consensus step to bring iterates close to each other; (b) a gradient step in the objective; and (c) several gradient steps in the constraint when faced with infeasibility of the iterates. Our algorithm overcomes the challenge of projection onto a set defined by  an infinite number of constraints in this way. We show that the iterates held by each node in the algorithm get close to each other as iterations progress and furthermore, two properties hold. First, the feasibility violation of the estimate at the $K$-th iteration is of the order $\bigo{\frac{1}{\sqrt{\threshold}}}$, and second, the error in the objective value is also of the order $\bigo{\frac{1}{\sqrt{\threshold}}}$, and thus, decays asymptotically to zero as $\threshold\to\infty$. A numerical experiment is provided to illustrate the performance of this framework and to compare it with two standard methods from the literature, namely, the distributed cutting-surface ADMM~\cite{AC-AZ-GB-ARH:22} and the distributed scenario approach~\cite{KM-AF-SG-MP:18}.

\subsection{Organization} Section~\ref{sec:problem_statement} introduces the semi-infinite program over a network. In Section~\ref{sec:algorithm}, we provide the distributed algorithm along with its detailed description. The convergence analysis is presented in Section~\ref{sec:convergence} and Section~\ref{sec:simulations} showcases illustrative examples. 

\subsection{Notation} The set $\N{}{}:=\setdef{1,2,\cdots}$ is the set of natural numbers. For a number $n\in\N{}{}$, $\until{n}:=\setdef{1,2,\cdots,n}$. Sets containing real  and non-negative real numbers are represented by $\R{}{}$ and $\RP{}$, respectively. For $n\in\R{}{}$, we denote by $\grtint{n}$ the largest integer that is less than or equal to $n$. The vector space \(\R{n}{}\) is assumed to be equipped with the standard inner product \(\inpr{a}{b} := \sum_{j=1}^n a_j b_j\) for every \(a,b \in \R{n}{}\). For a closed set $\F$ and any point $a\in\F$ we denote the normal cone at $a$ as  $\nfset{\F}\pbrack{a} :=\setdeff{b}{\inpr{b}{a-c}\leq 0 \text{ for all }c\in\F}$. By $\norm{\cdot}$ we denote the Euclidean norm on $\R{n}{}$, the associated dual norm is represented by $\norm{\cdot}_{*}$, which for the Euclidean norm is itself. For a closed set $\dumset$, the Euclidean projection to this set from an external point $\dumb{}{}$ is, $\proj{\dumset}{\dumb{}{}}:=\argmin_{\duma{}{}\in\dumset}\norm{\duma{}{}-\dumb{}{}}$. For a mapping $\confunc{}:\dvset{}\times \uvset{} \to\R{}{}$, the set $\lset{\confunc{}}{\alpha}:=\setdeff{\dv{}{}\in\dvset{}}{\max_{\uv{}{}{}\in\uvset{}}\confunc{}{\dv{}{}, \uv{}{}}=\alpha}$ denotes its marginal $\alpha$-level set, and $\gradg{}$ denotes its derivative with respect to the first variable whenever defined. For a subdifferentiable map $F_{\inode}:\dvset{}\to\R{}{}$, a vector $\gradf{\inode}{a}$ is its subgradient at the point $a\in\dvset{}$ if $\gradf{i}{a}^{\top}(b-a)\leq F_{i}(b)-F_{i}(a)$ for all $b\in\dvset{}$. The subdifferential of $F_i$ at $a$, denoted by $\sgradf{\inode}{a}$, is the set of all subgradients at $a$, i.e., $\sgradf{\inode}{a}:=\setdeff{c}{c^{\top}(b-a)\leq F_{i}(b)-F_{i}(a) \text{ for all } b\in\dvset{}}$.

\section{Problem statement}
\label{sec:problem_statement}
Consider a network of $\tnode\in \N{}{}$ computational nodes connected via a time-dependent directed graph $\graph{\oc} := (\node, \edge{\oc})$, where $\oc\in \N{}{}$ denotes the time. The vertex set $\node$ contains identifiers $\inode\in\until{\tnode}$ assigned to each node, and the edge set $\edge{\oc} \subset \node\times \node$ is the collection of directed edges between these nodes at a given time $\oc$. The weights associated with edges in $\edge{\oc}$ govern the communication over the network at time $\oc$. For a pair of nodes $(i,j)\in\node\times\node$  the weight associated with them at a time $\oc$ is $\wtgraph{\oc}{\inode}{\jnode}\in\lcrc{0}{1}$ and the adjacency matrix $\wtmat{\oc}\in\R{\tnode\times\tnode}{}$ contains weights for all such pairs. We have  $\wtgraph{\oc}{\inode}{\jnode}>0$ if and only if   $(i,j)\in\edge{\oc}$. At any given time, the neighboring nodes for any node are those nodes that send information to it. We denote the set containing neighboring nodes of an arbitrary node $\inode\in\node$ at time $\oc$ by $\neigh{\oc}{\inode}:=\setdeff{\jnode\in\node}{\wtgraph{\oc}{\inode}{\jnode}>0}$. The network transition matrix is given by  $\graphflow{t}{s}:=\wtmat{t}\wtmat{t-1}\cdots\wtmat{s}$ (where $t,s\in\N{}{}$ denote time instances satisfying $t>s\geq 1$).
\begin{assumption}
\label{asmp:graph}
\longthmtitle{Properties of the graph}
The network graph is stipulated to possess the properties:
\begin{enumerate}
\item[\namedlabel{gd1}{(G1)}] Each matrix $\wtmat{\oc}$ is assumed to be doubly stochastic, that is, $\sum_{\jnode\in\node}\wtgraph{\oc}{\inode}{\jnode}=\sum_{\jnode\in\node}\wtgraph{\oc}{\jnode}{\inode}=1$ for all $\inode\in\node$. 
\item[\namedlabel{gd2}{(G2)}] There exists a scalar $0<\minwt<1$ such that if $\wtgraph{\oc}{\inode}{\jnode}>0$ for nodes $\inode$, $\jnode\in\node$ and time $\oc$,  then  $\wtgraph{\oc}{\inode}{\jnode}\geq\minwt$.
\item[\namedlabel{gd3}{(G3)}] The sequence of graphs $\graph{\oc}$ is $\strong$-strongly connected, meaning that there exists $\strong>0$ such that the graph $\graph{\oc}_{\strong}:=\left(\node,\edge{\oc}_{\strong}\right)$ with $\edge{\oc}_{\strong} := \bigcup_{\tau=0}^{\strong-1}\edge{\oc+\tau}$ is strongly connected for every $\oc\geq 1$.~\footnote{A directed graph is strongly connected if for every pair of nodes in the graph there exists a path connecting these nodes in both directions.}  
\end{enumerate}
\end{assumption}

We consider the following optimization problem over the set of nodes $\node$ given by
\begin{equation}
\label{eq:sip_central}
\begin{aligned}
    &\min_{\dv{}{}} &&\sum_{\inode=1}^{\tnode} \objfunc{\inode}{\dv{}{}} \\
    &\sbjto &&\begin{cases}
    \dv{}{}\in \dvset{}, \\
    \confunc{}{\dv{}{},\uv{}{}} \leq 0 \text{ for all } \uv{}{}\in\uvset{},
    \end{cases}
\end{aligned}
\end{equation}
with the following data:
\begin{enumerate}
\item[\namedlabel{pd1}{(\ref{eq:sip_central}a)}] the \emph{domain} $\dvset{}\subset\R{\dvdim}{}$ has a non-empty interior and is compact and convex and its diameter is $\setdia:=\max_{\duma{}{},\dumb{}{}\in\dvset{}} \norm{\duma{}{}-\dumb{}{}}$;
\item[\namedlabel{pd2}{(\ref{eq:sip_central}b)}] the \emph{objective function} is $\mapdeff{\dv{}{}}{\dvset{}}{\objfunc{}{\dv{}{}}:=\sum_{\inode=1}^{\tnode} \objfunc{\inode}{\dv{}{}}}{\R{}{}}$, where each $\mapdeff{\dv{}{}}{\dvset{}}{\objfunc{\inode}{\dv{}{}}}{\R{}{}}$ is convex and subdifferentiable;
\item[\namedlabel{pd3}{(\ref{eq:sip_central}c)}] the \emph{constraint index} set $\uvset{}\subset\R{\uvdim}{}$  is compact;
\item[\namedlabel{pd4}{(\ref{eq:sip_central}d)}] the \emph{constraint function} $\mapdeff{\pbrack{\dv{}{},\uv{}{}}}{\dvset{}\times\uvset{}}{\confunc{}{\dv{}{},\uv{}{}}}{\R{}{}}$ is continuous, and is convex and continuously differentiable in $\dv{}{}$ for each fixed $\uv{}{}$.
\end{enumerate}

The optimization problem~\eqref{eq:sip_central} is a convex semi-infinite program (SIP), and due to~\ref{pd2}, the feasible set $\F:=\setdeff{\dv{}{}\in\dvset{}}{\confunc{}{\dv{}{},\uv{}{}} \leq 0 \text{ for all } \uv{}{}\in\uvset{}}$ is convex and depending on the constraint index set $\uvset{}$, the family of inequality constraints $\setdeff{\confunc{}{\dv{}{},\uv{}{}}\leq 0}{\uv{}{}\in\uvset{}}$ may contain infinitely many constraints. In the setup considered in this article, the nodes in the network aim to solve~\eqref{eq:sip_central} cooperatively in a situation where each node has only partial access to the objective function. Specifically, a node $\inode\in\node$ has access only to a \emph{local} objective function $\objfunc{i}$, the domain $\dvset{}$ and the constraint function $\confunc{}$. The following additional assumptions are made regarding problem~\eqref{eq:sip_central}:
\begin{assumption}
\longthmtitle{Regularity of problem~\eqref{eq:sip_central} data}
\label{asmp:data}
We assume the following:
\begin{enumerate}
    \item[\namedlabel{ad1}{(A1)}] 
    The \emph{feasible set} for problem~\eqref{eq:sip_central} given by $\F$ contains a point that satisfies the semi-infinite constraint strictly, that is, there exists $\bar{x} \in \F$ with $f(\bar{x},u) < 0$ for all $u \in \mathbb{U}$.
    \item[\namedlabel{ad2}{(A2)}] The subgradients of the local objective functions are bounded in the domain $\dvset{}$, $\dvgradf:=\max_{i\in\node}\max_{\dv{}{}\in\dvset{}} \norm{\gradf{i}{\dv{}{}}}<+\infty$.
    \item[\namedlabel{ad3}{(A3)}] The gradient of the constraint function is bounded on $\dvset{}$, i.e., $\dvgradg:=\max_{\dv{}{}\in\dvset{}, \uv{}{}\in\uvset{}} \norm{\gradg{}{\dv{}{}, \uv{}{}}}<+\infty$. We define $\maxgrad:=\max\{\dvgradf,\dvgradg\}$.
\end{enumerate}
\end{assumption}
 We denote the set of optimizers for the problem~\eqref{eq:sip_central} by $\solset$. Depending on the nature (eg., strong or strict convexity) of the objective function/constraints, $\solset$ may or may not be a singleton. In the sequel, we establish an algorithm with provable guarantees to solve the convex SIP~\eqref{eq:sip_central} over a time-varying network via distributed computations.

\section{Distributed alternating gradient descent}
\label{sec:solution}
Our Algorithm~\ref{algo:dis_com} is a consensus-based first-order primal approach to solve convex SIP~\eqref{eq:sip_central} over a time-varying $\strong$-strongly connected network. It is inspired by the \emph{CoMirror descent} method~\cite{AB-ABT-NGB-LT:10} originally developed to solve inequality-constrained convex optimization problems and later extended to convex SIPs in~\cite{BW-WBH-SZ:20}. 

\subsection{The algorithm}
\label{sec:algorithm}
\begin{algorithm}[!ht]
\SetAlgoLined
\DontPrintSemicolon
\SetKwInOut{ini}{Initialize}
\SetKwInOut{giv}{Data}
\tcc{Computations at a node $\inode\in\node$}
\giv{Threshold number of iterations $\threshold$, feasibility tolerances $\seqdef{\errseq{\oc}:=\frac{1}{\sqrt{\oc}}}{\oc\in \until{\threshold}}$, outer stepsizes $\seqdef{\lrate{\oc}:=\frac{\setdia}{\sqrt{\oc}}}{\oc\in \until{\threshold}}$, inner stepsizes $\seqdef{\loclrate{\inode}{\oc}{\ic}}{\ic\geq 1}$ for each $\oc\in\until{\threshold}$.}
\ini{ Initial guess $\dv{1}{\inode}\in\dvset{}$ for all $\inode\in\node$, $\fset{}:=\setdef{\grtint{\frac{\threshold}{2}},\grtint{\frac{\threshold}{2}}+1,\cdots,\threshold}$.} 
\BlankLine
\For{$\oc=1,2,\cdots,\threshold$}{
    Receive $\dv{\oc}{\jnode}$ from all $\jnode\in\neigh{\oc}{\inode}$ and send $\dv{\oc}{\inode}$ to all $\jnode\in\node$ such that $\inode\in\neigh{\oc}{\jnode}$ \;
    Set $\cdv{\oc}{\inode} \gets \sum_{\jnode=1}^{\tnode} \wtgraph{\oc}{\inode}{\jnode} \dv{\oc}{\jnode}$ \;
    Compute $\zdv{\oc}{\inode} \gets \proj{\dvset{}}{\cdv{\oc}{\inode}-\lrate{\oc}\gradf{\inode}{\cdv{\oc}{\inode}}}$ \; 
    Compute $\uv{\oc}{\inode} \gets \argmax_{\uv{}{}\in\uvset{}} \confunc{}{\zdv{\oc}{\inode},\uv{}{}}$ \;
    Set $\ic \gets 1$, $\bzdv{\inode}{\oc}{\ic} \gets \zdv{\oc}{\inode}$ and $\buv{\inode}{\oc}{\ic}\gets\uv{\oc}{\inode} $\;    
    Set $\ball{\oc}{\inode} \gets \setdeff{\dv{}{}\in\dvset{}}{\norm{\dv{}{}-\zdv{\oc}{\inode}}\leq \lrate{\oc}\dvgradf+\frac{\errseq{\oc}}{\mingrad}}$ \;
    \While{$\confunc{}{\bzdv{\inode}{\oc}{\ic},\buv{\inode}{\oc}{\ic}} > \errseq{\oc+1}$}{ 
    Set $\loclrate{\inode}{\oc}{\ic} \gets \frac{\confunc{}{\bzdv{\inode}{\oc}{\ic},\buv{\inode}{\oc}{\ic}}}{\norm{\gradg{}{\bzdv{\inode}{\oc}{\ic}, \buv{\inode}{\oc}{\ic}}}^2}$ \;
    Compute $\bzdv{\inode}{\oc}{\ic+1} \gets \proj{\ball{\oc}{\inode}}{\bzdv{\inode}{\oc}{\ic}-\loclrate{\inode}{\oc}{\ic}\gradg{}{\bzdv{\inode}{\oc}{\ic}, \buv{\inode}{\oc}{\ic}}} $ \;
    Compute $\buv{\inode}{\oc}{\ic+1} \gets \argmax_{\uv{}{}\in\uvset{}} \confunc{}{\bzdv{\inode}{\oc}{\ic+1},\uv{}{}}$ \;
    Set $\ic \gets \ic+1$\;
    }
    Set $\dv{\oc+1}{\inode}\gets\bzdv{\inode}{\oc}{\ic}$\;
}
\Return $\;\bdv{\threshold}{\inode}\gets\frac{1}{\sum_{\oc\in\fset{}}\lrate{\oc}}\sum_{\oc\in\fset{}}\lrate{\oc}\dv{\oc+1}{\inode}$
\caption{Distributed Alternating Gradient Descent}
\label{algo:dis_com}
\end{algorithm}

\subsubsection*{An informal description of Algorithm~\ref{algo:dis_com}}

Each node holds and updates an estimate of an optimizer of~\eqref{eq:sip_central}, this process is elaborated in detail below:
\begin{itemize}
    \item The algorithm is written from the perspective of an arbitrary node $\inode \in \node$, but requires all nodes to agree a priori on the number of iterations $\threshold$, the feasibility tolerances $\seqdef{\errseq{\oc}}{\oc\geq 1}$, and the stepsizes $\seqdef{\lrate{\oc}}{\oc\geq 1}$.
    \item The estimate of the optimizer at node $\inode$ is initialized as $\dv{1}{\inode} \in \dvset{}$ and the nodes exchange their estimates with their neighbors in Step 2. 
    \item In Step 3, a cumulative estimate $\cdv{\oc}{\inode}$ is obtained by averaging the neighbors' estimates using the edge weights $\pbrack{\text{present in }\wtmat{\oc}}$ of the network as weighting factors.  
    \item Projected subgradient descent is performed at the cumulative estimate in Step 4, we term this as the outer descent.
    \item Since the outer descent step is not guaranteed to result in a feasible estimate, Steps  6 to 13 are designed to ensure feasibility through a sequence of gradient steps on the constraint function. 
    \item In Step 6, the inner loop counter $\ic$ is initialized as 1, and the values of the estimate $\zdv{\oc}{\inode}$ and $\uv{\oc}{\inode}$ are copied to $\bzdv{\inode}{\oc}{1}$ and $\buv{\inode}{\oc}{1}$, respectively, for local computation. 
    \item A closed Euclidean ball $\ball{\oc}{\inode}$ is defined around the estimate $\zdv{\oc}{\inode}$ in Step 7, and is used as the feasible set for the projected gradient descent in the inner loop. 
    \item Step 8 checks the feasibility of the current estimate by verifying whether $\confunc{}{\bzdv{\inode}{\oc}{\ic},\buv{\inode}{\oc}{\ic}}>\errseq{\oc}$. If the estimate is feasible, the computations at the node loop to Step 2; otherwise, local computations based on  gradient descent are performed on the  constraint function, using the gradient $\gradg{}{\bzdv{\inode}{\oc}{\ic}, \buv{\inode}{\oc}{\ic}}$. 
    \item The stepsize for these computations follows the Polyak stepsize rule~\cite[Section 8.2.2]{AB:17}: $\loclrate{\inode}{\oc}{\ic} \gets \frac{\confunc{}{\bzdv{\inode}{\oc}{\ic},\buv{\inode}{\oc}{\ic}}}{\norm{\gradg{}{\bzdv{\inode}{\oc}{\ic}, \buv{\inode}{\oc}{\ic}}}^2}$. A new estimate is then obtained in Step 10. We refer to Steps 9 and 10 as the inner descent.
    \item The inner descent aims to obtain a feasible estimate at the node. For every outer iteration $\oc$, the loop comprising Steps 8 through 13 is executed until a feasible estimate is obtained. 
    \item Once the node's estimate becomes feasible, the value of the local variable $\bzdv{\inode}{\oc}{\ic}$ is transferred to $\dv{\oc+1}{\inode}$ as the next estimate of the optimizer at the node, after which the computation returns to Step 2.
    \item This sequence of computations repeats until a threshold number $\threshold$ of outer iterations is reached. The algorithm then returns the weighted average of the estimates ($\bdv{}{\inode}$) as the final estimate of the optimizer at node $\inode$.
\end{itemize}

\begin{remark}\longthmtitle{Finite number of constraints}
\label{rem:finiteconstraints}
Note that Algorithm~\ref{algo:dis_com} can be easily modified to handle constraints of the form $\confunc{}{\dv{}{}}\leq 0$. In this case, Step 5 and Step 11 become redundant since the feasibility of the estimate can be directly checked at Step 8 by evaluating $\confunc{}{\bzdv{\inode}{\oc}{\ic}}$. Similarly, in Step 9, the stepsize of the inner descent is updated to $\loclrate{\inode}{\oc}{\ic} \gets \frac{\confunc{}{\bzdv{\inode}{\oc}{\ic}}}{\norm{\gradgg{}{\bzdv{\inode}{\oc}{\ic}}}^2}$, and the projected gradient descent at Step 10 uses $\gradgg{}{\bzdv{\inode}{\oc}{\ic}}$ as the gradient. Despite these modifications, the analysis of the algorithm remains unchanged, and the guarantees established in the subsequent section continue to hold.
\end{remark}

\begin{remark}
\longthmtitle{Projection onto the closed ball} 
\label{rem:projection}
For an arbitrary node $\inode\in\node$, Step 10 of Algorithm~\ref{algo:dis_com} consists of projected gradient descent iteration with projection onto the closed Euclidean ball $\ball{\oc}{\inode}$. From an external point $z$, this projection may be easily calculated as $\proj{\ball{\oc}{\inode}}{z}=\zdv{\oc}{\inode}+\pbrack{\lrate{\oc}\dvgradf+\frac{\errseq{\oc}}{\mingrad}}\frac{z-\zdv{\oc}{\inode}}{\norm{z-\zdv{\oc}{\inode}}}$. 
\end{remark}
\subsection{Convergence analysis of Algorithm~\ref{algo:dis_com}}
\label{sec:convergence}
In this section, we present the convergence guarantees of the main algorithm. We derive an upper bound on the suboptimality gap, measured by the difference between  the objective value corresponding to the returned estimate at each node and the global optimal value. We also establish guarantees on the feasibility of these estimates. To build toward the main result stated in Theorem~\ref{thm:optimality}, we first present several supporting results. Additionally, in Proposition~\ref{prop:maxstep}, we show that the number of inner descent steps required at any node to obtain a feasible estimate  (determined by the predefined feasibility tolerance sequence) is finite. 

Due to their relevance in subsequent analysis, we introduce the following notations: $\maxstep{\inode}{\oc}$ represents the number of inner iterations required by the node $\inode\in\node$ at the $\oc^{\text{th}}$ iteration of the outer loop, $\maxstep{}{\oc}:=\max_{\inode\in\node}\maxstep{\inode}{\oc}$ and for an arbitrary contiguous set of outer iterations $\dfset{}\subset\until{\threshold}$, we define $\maxstep{}{\dfset{}}:=\max_{\oc\in\dfset{}}\maxstep{}{\oc}$.

Note that Assumption~\ref{asmp:data} is a standing assumption for all the results presented here. Before proceeding to analyze the properties of the iterates, we first present a short result which states that strict feasibility, Assumption~\ref{ad1}, implies a lower bound on the gradient of the constraint function over a particular level set. 
\begin{lemma}\longthmtitle{Strict feasibility implies gradient lower bound for constraint function}\label{lem:strict}
    Let Assumption~\ref{ad1} hold.  Then, there exists $G_0 > 0$ such that $\min_{\dv{}{}\in\lset{\confunc{}}{0}}\norm{\gradg{}{\dv{}{},\uv{*}{}(\dv{}{})}}\geq\mingrad$, for all $\uv{*}{}(\dv{}{})\in \mathbb{U}^*(x) :=\argmax_{\uv{}{}\in\uvset{}} \confunc{}{\dv{}{},\uv{}{}}$.
\end{lemma}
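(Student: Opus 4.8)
The plan is to argue by contradiction, exploiting compactness and the strict feasibility point $\bar x$ from Assumption~\ref{ad1}. Define the marginal constraint value $h(x) := \max_{u \in \uvset{}} \confunc{}{x,u}$; by the continuity of $\confunc{}$ in $(x,u)$ and compactness of $\uvset{}$, $h$ is continuous on $\dvset{}$, and the level set $\lset{\confunc{}}{0} = \{x \in \dvset{} : h(x) = 0\}$ is closed, hence compact. Suppose, for contradiction, that no uniform lower bound $\mingrad > 0$ exists. Then there is a sequence $\{x_k\} \subset \lset{\confunc{}}{0}$ and maximizers $u_k \in \uvset{}^*(x_k)$ with $\norm{\gradg{}{x_k, u_k}} \to 0$. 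By compactness of $\dvset{}$ and $\uvset{}$, pass to a subsequence so that $x_k \to x_\infty \in \lset{\confunc{}}{0}$ (closedness) and $u_k \to u_\infty$; by continuity of the maximum and of $\confunc{}$, $u_\infty \in \uvset{}^*(x_\infty)$ and $\confunc{}{x_\infty, u_\infty} = h(x_\infty) = 0$. Since $\gradg{}$ is continuous (Assumption~\ref{pd4}), $\gradg{}{x_\infty, u_\infty} = 0$.

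The contradiction then comes from convexity of $x \mapsto \confunc{}{x, u_\infty}$ together with the strict feasibility of $\bar x$. Using the subgradient (here gradient) inequality for this convex function at $x_\infty$, evaluated at $\bar x$:
\begin{equation*}
\confunc{}{\bar x, u_\infty} \;\geq\; \confunc{}{x_\infty, u_\infty} + \inpr{\gradg{}{x_\infty, u_\infty}}{\bar x - x_\infty} \;=\; 0 + 0 \;=\; 0,
\end{equation*}
which contradicts $\confunc{}{\bar x, u_\infty} < 0$ guaranteed by Assumption~\ref{ad1}. Hence the infimum of $\norm{\gradg{}{x, u^*(x)}}$ over $x \in \lset{\confunc{}}{0}$ and over maximizers $u^*(x)$ is strictly positive; call it $\mingrad > 0$. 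One should be slightly careful that the infimum is actually attained or at least strictly positive: the cleanest route is to note that the set $\{(x,u) : x \in \lset{\confunc{}}{0},\ u \in \uvset{}^*(x)\}$ is compact (it is a closed subset of the compact set $\dvset{} \times \uvset{}$, closedness following from continuity of $h$ and of $\confunc{}$), and $\norm{\gradg{}{\cdot,\cdot}}$ is continuous on it, so the infimum is a minimum; if that minimum were $0$ we would reach the same contradiction as above.

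The main obstacle is the bookkeeping around maximizers: $\uvset{}^*(x)$ is in general set-valued and only outer semicontinuous, so one must phrase the limiting argument at the level of the graph of the maximizer correspondence rather than picking a single continuous selection. This is handled by taking convergent subsequences of the pairs $(x_k, u_k)$ simultaneously and invoking Berge's maximum theorem (or, elementarily, the continuity of $h$ plus closedness of the graph of $\uvset{}^*$) to ensure the limit $u_\infty$ is a genuine maximizer at $x_\infty$ with $\confunc{}{x_\infty,u_\infty}=0$. Everything else — continuity of $h$, closedness of the level set, the gradient inequality — is routine once this point is set up correctly.
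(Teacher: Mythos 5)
Your proof is correct, and it reaches the same conclusion by the same underlying mechanism as the paper — compactness of the zero level set, a limiting argument along the graph of the maximizer correspondence, and the observation that convexity plus the strictly feasible point $\bar{x}$ forbids a vanishing gradient on $\mathbb{L}_0(f)$ — but the execution differs in a way worth noting. The paper first invokes Danskin's theorem to identify $\partial \bar{f}(x)$ with the convex hull of $\{\nabla_x f(x,u^*(x))\}$, applies the subgradient inequality to the marginal function $\bar{f}$ to show every such gradient is nonzero, and then separately establishes lower semicontinuity of $\kappa(y) := \min_{u^*(y)} \norm{\nabla_x f(y,u^*(y))}$ to upgrade pointwise nonvanishing to a uniform bound. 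You bypass Danskin entirely by applying the gradient inequality directly to the convex function $x \mapsto f(x,u_\infty)$ at the limiting maximizer $u_\infty$, and you fold the nonvanishing and uniformity steps into a single subsequence-contradiction argument on the compact graph $\{(x,u) : x \in \mathbb{L}_0(f),\ u \in \mathbb{U}^*(x)\}$. This is more elementary and arguably cleaner. One small point that both your argument and the paper's leave implicit: the step $\nabla_x f(x_k,u_k) \to \nabla_x f(x_\infty,u_\infty)$ requires continuity of the gradient jointly in $(x,u)$, whereas the standing assumption only gives continuous differentiability in $x$ for each fixed $u$. This is harmless here — since $f(\cdot,u_k) \to f(\cdot,u_\infty)$ pointwise and these are differentiable convex functions, their gradients converge locally uniformly (Rockafellar, Theorem 25.7) — but it deserves a sentence in a fully rigorous write-up.
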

\begin{proof}
    Denote $\bar{f}(x) := \max_{u \in \mathbb{U}} f(x,u)$ and let $\bar{x} \in \mathbb{F}$ be the point satisfying $\bar{f}(\bar{x}) < 0$. By Danskin's theorem~\cite[Proposition B.25]{DPB:99}, we deduce that $\bar{f}$ is convex and 
    \begin{align}\label{eq:sub}
       \partial \bar{f}(x) = \mathrm{conv}\{\nabla_x f(x,u^*(x)) \, : \, u^*(x) \in \mathbb{U}^*(x) \} 
    \end{align}
    for any $x \in \mathbb{X}$. Now pick any point $y \in \mathbb{L}_0(f)$ and any $\zeta_y \in \partial \bar{f}(y)$. By convexity and the definition of the subgradient, we get $\bar{f}(y) + \zeta_y^\top (\bar{x} - y) \le \bar{f}(\bar{x}) < 0$. This implies that $\norm{\zeta_y} \not = 0$ and since $\zeta_y$ was arbitrarily picked, we conclude from~\eqref{eq:sub} that for any $y \in \mathbb{L}_0(f)$, we have
    \begin{align}\label{eq:grad-nonzero}
        \norm{\nabla_x f(y, u^*(y))} \not = 0, \text{ for all } u^*(y) \in \mathbb{U}^*(y).
    \end{align}
    Having established the above fact, we proceed to show the lower bound on this gradient over the set $\mathbb{L}_0(f)$. Firstly, note that $\mathbb{L}_0(f)$ is compact as $\mathbb{L}_0(f)$ is closed and is a subset of a compact set $\mathbb{X}$.  Further, from~\cite[Proposition 4.4]{JB-AS:00}, the map $x \mapsto \mathbb{U}^*(x)$ is upper semicontinuous. This means that for any sequence $\{(x_k,u_k)\}_{k \in \mathbb{N}} \subset \mathbb{X} \times \mathbb{U}$ with $u_k \in \mathbb{U}^*(x_k)$ for all $k$, if $x_k \to x'$, then all accumulation points of $\{u_k\}$ belong to $\mathbb{U}^*(x')$. We will use these properties to infer that the map $y \mapsto \kappa(y) := \min_{u^*(y) \in \mathbb{U}^*(y)} \norm{\nabla_x f(y,u^*(y))}$ is lower semicontinuous. To see the reasoning, note that since $f$ is continuously differentiable, the value $\kappa(y)$ is attained at all $y$. Thus, for any sequence $\{y_k\}$, there exists a sequence $\{u_k \in \mathbb{U}^*(y_k)\}$ such that $\kappa(y_k) = \norm{\nabla_x f(y_k,u_k)}$ for all $k$. 
    Let $y_k \to y'$ and evaluating the limit we get $\lim_{k \to \infty} \kappa(y_k) = \lim_{k \to \infty} \norm{\nabla_x f(y_k,u_k)} = \norm{\nabla_x f(y',u')}$ for some $u' \in \mathbb{U}^*(y')$. The existence of such a $u'$ is possible since the sequence belongs to a compact set. Thus, we get $\lim_{k \to \infty} \kappa(y_k) \ge \kappa(y')$ showing lower semicontinuity of $\kappa$. Finally using this property, the fact~\eqref{eq:grad-nonzero}, and compactness of $\mathbb{L}_0(f)$, we conclude the result. 
\end{proof}
We henceforth define $G_0$ as the lower bound established in the above result.
The following lemma establishes an upper bound on the distance between the current weighted average $\cdv{\oc}{\inode}$ and the next estimate $\zdv{\oc}{\inode}$ at a node $\inode\in\node$.
\begin{lemma}
\longthmtitle{Maximum distance between the weighted average and the next estimate at a node}
\label{lem:loc_consensus}
At any node $\inode\in\node$, and the $\oc^{\text{th}}$ outer iteration of Algorithm~\ref{algo:dis_com} we have
\begin{equation}
\label{eq:stepbound}
\norm{\cdv{\oc}{\inode}- \zdv{\oc}{\inode}} \leq \lrate{\oc} \norm{\gradf{\inode}{\cdv{\oc}{\inode}}}_{*}.
\end{equation}
\end{lemma}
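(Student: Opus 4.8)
The plan is to recognize that $\zdv{\oc}{\inode}$ is, by construction in Step 4, the Euclidean projection onto $\dvset{}$ of the point $\cdv{\oc}{\inode} - \lrate{\oc}\gradf{\inode}{\cdv{\oc}{\inode}}$, and then to exploit the contractive (nonexpansive) property of the projection operator together with the fact that $\cdv{\oc}{\inode}$ itself lies in $\dvset{}$. Concretely, since $\cdv{\oc}{\inode}=\sum_{\jnode}\wtgraph{\oc}{\inode}{\jnode}\dv{\oc}{\jnode}$ is a convex combination (using \ref{gd1}) of the points $\dv{\oc}{\jnode}\in\dvset{}$, convexity of $\dvset{}$ forces $\cdv{\oc}{\inode}\in\dvset{}$, hence $\proj{\dvset{}}{\cdv{\oc}{\inode}}=\cdv{\oc}{\inode}$.

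The key step is then the standard firm-nonexpansiveness / nonexpansiveness of Euclidean projection onto a closed convex set: for any $a,b$ one has $\norm{\proj{\dvset{}}{a}-\proj{\dvset{}}{b}}\leq\norm{a-b}$. Applying this with $a=\cdv{\oc}{\inode}-\lrate{\oc}\gradf{\inode}{\cdv{\oc}{\inode}}$ and $b=\cdv{\oc}{\inode}$ gives
\begin{equation*}
\norm{\zdv{\oc}{\inode}-\cdv{\oc}{\inode}}=\norm{\proj{\dvset{}}{\cdv{\oc}{\inode}-\lrate{\oc}\gradf{\inode}{\cdv{\oc}{\inode}}}-\proj{\dvset{}}{\cdv{\oc}{\inode}}}\leq\norm{\lrate{\oc}\gradf{\inode}{\cdv{\oc}{\inode}}}=\lrate{\oc}\norm{\gradf{\inode}{\cdv{\oc}{\inode}}}.
\end{equation*}
Finally, since the paper works with the Euclidean norm whose dual norm is itself (as noted in the Notation section), $\norm{\gradf{\inode}{\cdv{\oc}{\inode}}}=\norm{\gradf{\inode}{\cdv{\oc}{\inode}}}_{*}$, which yields~\eqref{eq:stepbound}.

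There is no real obstacle here; the only points requiring a line of justification are (i) that $\cdv{\oc}{\inode}\in\dvset{}$, which needs the double-stochasticity in \ref{gd1} (or at least row-stochasticity) plus convexity of $\dvset{}$, and (ii) invoking nonexpansiveness of the projection — both are textbook facts. If one prefers to avoid citing nonexpansiveness, an alternative is to use the variational characterization of the projection directly: $\langle \cdv{\oc}{\inode}-\lrate{\oc}\gradf{\inode}{\cdv{\oc}{\inode}}-\zdv{\oc}{\inode},\,\dv{}{}-\zdv{\oc}{\inode}\rangle\leq 0$ for all $\dv{}{}\in\dvset{}$, specialized at $\dv{}{}=\cdv{\oc}{\inode}$, followed by Cauchy–Schwarz; this reproduces the same bound in a couple of lines and is the route I would write out.
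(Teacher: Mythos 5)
Your proposal is correct. Your primary route — observing that $\cdv{\oc}{\inode}\in\dvset{}$ (a convex combination of points of $\dvset{}$, all iterates remaining in $\dvset{}$ since every projection in the algorithm is onto $\dvset{}$ or onto $\ball{\oc}{\inode}\subseteq\dvset{}$), so that $\proj{\dvset{}}{\cdv{\oc}{\inode}}=\cdv{\oc}{\inode}$, and then invoking nonexpansiveness of the Euclidean projection — is a mild variant of what the paper does: the paper instead writes Step 4 as a proximal minimization, extracts the first-order optimality condition $0\in\lrate{\oc}\gradf{\inode}{\cdv{\oc}{\inode}}+\zdv{\oc}{\inode}-\cdv{\oc}{\inode}+\nfset{\dvset{}}\pbrack{\zdv{\oc}{\inode}}$, tests the resulting variational inequality at $\dumx{}{}=\cdv{\oc}{\inode}$, and finishes with H\"older's inequality. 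Your suggested alternative (variational characterization of the projection specialized at $\dv{}{}=\cdv{\oc}{\inode}$, then Cauchy--Schwarz) is exactly the paper's argument in different notation, and also implicitly needs $\cdv{\oc}{\inode}\in\dvset{}$, just as the paper does. The nonexpansiveness route is marginally shorter and arguably cleaner; the variational-inequality route has the minor advantage of not requiring one to cite nonexpansiveness as a black box and of matching the style of the later projected-descent estimates in the paper (e.g., the adaptation of the fundamental inequality in Lemma~\ref{lem:sum_grad}). Either way, no gap.
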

\begin{proof}
The projected subgradient descent iteration at Step 4 of Algorithm~\ref{algo:dis_com} can be equivalently written as,
\begin{equation*}
\zdv{\oc}{\inode} = \argmin_{\dv{}{}\in\dvset{}} \left\{\objfunc{\inode}{\cdv{\oc}{\inode}}+\langle\gradf{\inode}{\cdv{\oc}{\inode}},\dv{}{}-\cdv{\oc}{\inode}\rangle+\frac{1}{2\lrate{\oc}}\norm{\dv{}{}- \cdv{\oc}{\inode}}^2\right\}.
\end{equation*}
Then, by the first-order optimality condition we have,
\begin{equation*}
0\in\lrate{\oc}\gradf{\inode}{\cdv{\oc}{\inode}} + \zdv{\oc}{\inode} - \cdv{\oc}{\inode} + \nfset{\dvset{}}\pbrack{\zdv{\oc}{\inode}},
\end{equation*}
where $\nfset{\dvset{}}\pbrack{\zdv{\oc}{\inode}}$ is the normal cone of $\dvset{}$ at $\zdv{\oc}{\inode}$. Rearranging terms and using the definition of the normal cone gives
\begin{equation*} 
\langle \lrate{\oc} \gradf{\inode}{\cdv{\oc}{\inode}}, \dumx{}{}-\zdv{\oc}{\inode}\rangle \geq \langle \cdv{\oc}{\inode}-\zdv{\oc}{\inode} , \dumx{}{}-\zdv{\oc}{\inode} \rangle, 
\end{equation*}
where $\dumx{}{}$ is any point in the domain $\dvset{}$. Let $\dumx{}{}=\cdv{\oc}{\inode}$, then by using Holder's inequality on the left-hand side, we get,
\begin{equation*} 
\lrate{\oc} \norm{\gradf{\inode}{\cdv{\oc}{\inode}}}_{*} \norm{\cdv{\oc}{\inode}-\zdv{\oc}{\inode}} \geq \norm{\cdv{\oc}{\inode}-\zdv{\oc}{\inode}}^2,
\end{equation*}
implying,
\begin{equation*}
\norm{\cdv{\oc}{\inode}-\zdv{\oc}{\inode}} \leq\lrate{\oc} \norm{\gradf{\inode}{\cdv{\oc}{\inode}}}_{*}. 
\end{equation*}
\end{proof}

As stated earlier, the role of the inner descent is to provide a feasible estimate when a node encounters infeasibility. We next proceed to establish that the inner descent loop terminates after a finite number of iterations. To this end, we first show that the closed ball used to define the inner descent step has a nonempty intersection with the feasible set of problem~\eqref{eq:sip_central}.
\begin{lemma}
\longthmtitle{Significance of the ball around an infeasible estimate}
\label{lem:ball}
For an arbitrary iteration $\oc$ of Algorithm~\ref{algo:dis_com}, we have $\ball{\oc}{\inode} \cap \F \not = \emptyset$, where $\ball{\oc}{\inode}$ is defined in Step 7 of the algorithm. 
\end{lemma}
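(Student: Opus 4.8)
The plan is to show that the ball $\ball{\oc}{\inode}$ centered at $\zdv{\oc}{\inode}$ with radius $r_{\oc}:=\lrate{\oc}\dvgradf+\frac{\errseq{\oc}}{\mingrad}$ necessarily reaches the feasible set $\F$. The natural strategy is to produce an explicit feasible point within this radius by moving from $\zdv{\oc}{\inode}$ (which may be infeasible) along a descent direction of the marginal constraint function $\bar{f}(\dv{}{}):=\max_{\uv{}{}\in\uvset{}}\confunc{}{\dv{}{},\uv{}{}}$ toward the feasible set, and to bound the distance travelled by $r_{\oc}$. The radius has two summands: the term $\frac{\errseq{\oc}}{\mingrad}$ should absorb any residual infeasibility that is allowed to persist (recall the inner loop at outer iteration $\oc$ terminates only when the violation drops to at most $\errseq{\oc+1}$, and the ball is defined with the looser tolerance $\errseq{\oc}$), while the term $\lrate{\oc}\dvgradf$ should account for the worst-case jump created by the outer subgradient step.

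First I would invoke Lemma~\ref{lem:loc_consensus} to control $\norm{\cdv{\oc}{\inode}-\zdv{\oc}{\inode}}\leq\lrate{\oc}\norm{\gradf{\inode}{\cdv{\oc}{\inode}}}_{*}\leq\lrate{\oc}\dvgradf$ using Assumption~\ref{ad2}. Since the cumulative estimate $\cdv{\oc}{\inode}$ is a convex combination of the previous iterates $\dv{\oc}{\jnode}$, and each of those (for $\oc\geq 2$) was produced by the inner loop at the previous outer iteration and therefore satisfies $\bar{f}(\dv{\oc}{\jnode})\leq\errseq{\oc}$ (and for $\oc=1$ one should either assume or arrange that the initial guesses lie in $\F$, or handle it via the same bound), convexity of $\bar{f}$ gives $\bar{f}(\cdv{\oc}{\inode})\leq\errseq{\oc}$. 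Next I would argue: either $\zdv{\oc}{\inode}\in\F$, in which case $\zdv{\oc}{\inode}$ itself lies in $\ball{\oc}{\inode}\cap\F$ and we are done; or $\bar{f}(\zdv{\oc}{\inode})>0$, and then I want to travel from $\zdv{\oc}{\inode}$ to a point $y$ with $\bar{f}(y)\leq 0$. The key geometric fact is that moving along $-\zeta/\norm{\zeta}^2$ for a subgradient $\zeta\in\partial\bar{f}$ decreases $\bar{f}$ at a controlled rate; more precisely, for any $y$ on the segment between $\zdv{\oc}{\inode}$ and the feasible set, a subgradient step reduces $\bar{f}$ by at least the current positive value while moving a distance of at most (current value)$/\mingrad$, since Lemma~\ref{lem:strict} guarantees $\norm{\zeta}\geq\mingrad$ on the level set $\lset{\confunc{}}{0}$, and by continuity/convexity on a neighbourhood of it. The cleanest route: let $y^*$ be the point where the segment from $\cdv{\oc}{\inode}$ toward $\F$ (or the segment from $\zdv{\oc}{\inode}$) first hits $\lset{\confunc{}}{0}$; along this segment $\bar{f}$ is convex and decreasing, and I can bound $\norm{\zdv{\oc}{\inode}-y^*}$ by $\norm{\zdv{\oc}{\inode}-\cdv{\oc}{\inode}} + \norm{\cdv{\oc}{\inode}-y^*}$, bounding the second term using the lower gradient bound $\mingrad$ and $\bar{f}(\cdv{\oc}{\inode})\leq\errseq{\oc}$ via the elementary inequality $\bar{f}(\cdv{\oc}{\inode})\geq\bar{f}(\cdv{\oc}{\inode})-\bar{f}(y^*)\geq\mingrad\,\norm{\cdv{\oc}{\inode}-y^*}$ valid when the directional derivative magnitude is at least $\mingrad$ along the way.

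Assembling these, $\norm{\zdv{\oc}{\inode}-y^*}\leq\lrate{\oc}\dvgradf+\frac{\errseq{\oc}}{\mingrad}=r_{\oc}$, so $y^*\in\ball{\oc}{\inode}$, and since $\bar{f}(y^*)\leq 0$ we also have $y^*\in\F$ (using $y^*\in\dvset{}$ by convexity of $\dvset{}$ and $\cdv{\oc}{\inode},\zdv{\oc}{\inode}\in\dvset{}$), giving $\ball{\oc}{\inode}\cap\F\neq\emptyset$.

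I expect the main obstacle to be the middle step: justifying rigorously the inequality $\bar{f}(\cdv{\oc}{\inode})-\bar{f}(y^*)\geq\mingrad\,\norm{\cdv{\oc}{\inode}-y^*}$, i.e., that the gradient lower bound of Lemma~\ref{lem:strict}, which is stated only on the exact level set $\lset{\confunc{}}{0}$, actually controls the whole segment from $\cdv{\oc}{\inode}$ down to $y^*$. One needs to combine convexity of $\bar{f}$ with Lemma~\ref{lem:strict}: by convexity, a subgradient $\zeta$ at $y^*$ satisfies $\bar{f}(\cdv{\oc}{\inode})\geq\bar{f}(y^*)+\zeta^{\top}(\cdv{\oc}{\inode}-y^*)$, and because $\bar{f}$ increases as one moves from $y^*$ toward $\cdv{\oc}{\inode}$ (as $\bar{f}(\cdv{\oc}{\inode})\geq 0=\bar{f}(y^*)$, picking $y^*$ as the last crossing point), the inner product $\zeta^{\top}(\cdv{\oc}{\inode}-y^*)$ is nonnegative and in fact at least $\norm{\zeta}$ times the projection, which one massages to $\geq\mingrad\norm{\cdv{\oc}{\inode}-y^*}$ after possibly replacing $\cdv{\oc}{\inode}$ by the crossing point along the ray. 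A careful statement may require choosing the segment direction as $-\zeta/\norm{\zeta}$ from $\cdv{\oc}{\inode}$ rather than toward an arbitrary feasible point, so that the descent is along the (sub)gradient and the bound is immediate; I would organize the proof that way. The edge case $\oc=1$ (whether $\dv{1}{\inode}$ is feasible) and the case $\norm{\zdv{\oc}{\inode}-\cdv{\oc}{\inode}}$ or the subgradient vanishing are minor and handled by the "$\zdv{\oc}{\inode}\in\F$ already" branch.
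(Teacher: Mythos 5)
Your skeleton matches the paper's: split the radius via the triangle inequality into $\norm{\cdv{\oc}{\inode}-\zdv{\oc}{\inode}}\leq\lrate{\oc}\dvgradf$ (Lemma~\ref{lem:loc_consensus} plus Assumption~\ref{ad2}) and a distance-to-$\F$ term, use convexity and the per-node tolerance to get $\bconfunc\pbrack{\cdv{\oc}{\inode}}\leq\errseq{\oc}$, and then establish the error bound $\mathrm{dist}(\cdv{\oc}{\inode},\F)\leq\errseq{\oc}/\mingrad$. The gap is in the last step, and you have correctly located it yourself but your proposed repair does not work. You suggest travelling from $\cdv{\oc}{\inode}$ along $-\zeta/\norm{\zeta}$ for a subgradient $\zeta\in\partial\bconfunc\pbrack{\cdv{\oc}{\inode}}$ "so that the descent is along the (sub)gradient and the bound is immediate." For a nonsmooth convex function the negative of a subgradient is not a guaranteed descent direction at rate $\norm{\zeta}$: convexity only gives the lower bound $\bconfunc\pbrack{x-t\zeta}\geq\bconfunc\pbrack{x}-t\norm{\zeta}^2$, which points the wrong way. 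Concretely, for $\bconfunc\pbrack{x}=x_1+\abs{x_2}$ at $(1,0)$ the subgradient $\zeta=(1,1)$ yields $\bconfunc\pbrack{(1,0)-t\zeta}=1$ for all $t\geq 0$, so the ray never reaches the zero level set. Since $\bconfunc$ here is a pointwise maximum over $\uv{}{}\in\uvset{}$, nonsmoothness is the generic case and this failure mode is real. Moreover, even where descent does occur, Lemma~\ref{lem:strict} bounds gradient norms only on $\lset{\confunc{}}{0}$, not at $\cdv{\oc}{\inode}$.

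The paper closes this gap differently: it sets $y^*=\pcdv{\oc}{\inode}:=\proj{\F}{\cdv{\oc}{\inode}}$, observes that $\cdv{\oc}{\inode}-y^*\in\nfset{\F}\pbrack{y^*}$, and invokes the characterization $\bigcup_{\lambda\geq 0}\lambda\,\partial\bconfunc\pbrack{y^*}=\nfset{\F}\pbrack{y^*}$ (\cite[Theorem 2.4.7, Corollary 1]{FHC:83}) to produce a subgradient $c\in\partial\bconfunc\pbrack{y^*}$ that is \emph{collinear} with $\cdv{\oc}{\inode}-y^*$. Collinearity is exactly what upgrades the subgradient inequality $\langle c,\cdv{\oc}{\inode}-y^*\rangle\leq\bconfunc\pbrack{\cdv{\oc}{\inode}}-\bconfunc\pbrack{y^*}$ to $\norm{c}\norm{\cdv{\oc}{\inode}-y^*}\leq\errseq{\oc}$, and since $y^*\in\lset{\confunc{}}{0}$ the lower bound $\norm{c}\geq\mingrad$ from Lemma~\ref{lem:strict} applies at the right point. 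This projection-plus-normal-cone device is the missing idea; without it (or an equivalent alignment argument) the inequality $\bconfunc\pbrack{\cdv{\oc}{\inode}}-\bconfunc\pbrack{y^*}\geq\mingrad\norm{\cdv{\oc}{\inode}-y^*}$ that your argument hinges on is not justified, because for a crossing point $y^*$ of an arbitrary segment the Cauchy--Schwarz inequality runs in the unhelpful direction. Your observation about the $\oc=1$ edge case (the initial guesses must satisfy the tolerance $\errseq{1}$) is a fair one that the paper also passes over silently.
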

\begin{proof}
Let $\bconfunc\pbrack{\dv{}{}}:=\max_{\uv{}{}{}\in\uvset{}}\confunc{}{\dv{}{}, \uv{}{}}$. Since $\uvset{}$ is compact and $\confunc{}$ is convex and continuously differentiable for a fixed second variable, by Danskin's theorem~\cite[Proposition B.25]{DPB:99} we deduce that $\bconfunc$ is convex and subdifferentiable. Using this fact, we deduce that $\F$ is closed and convex. Consider $\pcdv{\oc}{\inode}:=\proj{\F}{\cdv{\oc}{\inode}}$, then the vector  $\cdv{\oc}{\inode} - \pcdv{\oc}{\inode}$  belongs to the normal cone $\nfset{\F}\pbrack{\pcdv{\oc}{\inode}}$. Furthermore, according to~\cite[Theorem 2.4.7 - Corollary 1]{FHC:83}, the cone generated by the subgradient   $\partial \bconfunc\pbrack{\pcdv{\oc}{\inode}}$ satisfies  $\bigcup_{\lambda\geq 0}\lambda\partial \bconfunc\pbrack{\pcdv{\oc}{\inode}} = \nfset{\F}\pbrack{\pcdv{\oc}{\inode}}$. Pick $c\in\partial \bconfunc\pbrack{\pcdv{\oc}{\inode}}$ such that $c$ and the line segment $\cdv{\oc}{\inode}-\pcdv{\oc}{\inode}$ are collinear. Using convexity of $\bconfunc$ it follows that $\langle c, \cdv{\oc}{\inode}-\pcdv{\oc}{\inode}\rangle \leq \bconfunc\pbrack{\cdv{\oc}{\inode}}-\bconfunc\pbrack{\pcdv{\oc}{\inode}}$, along with collinearity this implies $\norm{c}\norm{\cdv{\oc}{\inode}-\pcdv{\oc}{\inode}} \leq \bconfunc\pbrack{\cdv{\oc}{\inode}}-\bconfunc\pbrack{\pcdv{\oc}{\inode}}$. We know that all nodes $\inode\in\node$ satisfy $\max_{\uv{}{}\in\uvset{}} \confunc{}{\dv{\oc}{\inode},\uv{}{}} \leq \errseq{\oc}$ implying using convexity that this property is also inherited by the weighted averages, namely, $\max_{\uv{}{}\in\uvset{}} \confunc{}{\cdv{\oc}{\inode},\uv{}{}} \leq \errseq{\oc}$. Using this fact, the definition of $\F$, and the lower bound on $\norm{c}$ due to  Lemma~\ref{lem:strict}, we have $\norm{\cdv{\oc}{\inode}-\pcdv{\oc}{\inode}}\leq \frac{\errseq{\oc}}{\mingrad}$. Subsequently, by using Assumption~\ref{ad2} and Lemma~\ref{lem:loc_consensus} we have $\norm{\cdv{\oc}{\inode}- \zdv{\oc}{\inode}} \leq \lrate{\oc} \dvgradf$. Thus, $\norm{\pcdv{\oc}{\inode}- \zdv{\oc}{\inode}} \leq \norm{\pcdv{\oc}{\inode}- \cdv{\oc}{\inode}} + \norm{\cdv{\oc}{\inode}- \zdv{\oc}{\inode}} \leq \lrate{\oc}\dvgradf+\frac{\errseq{\oc}}{\mingrad}$, implying that if we take a ball of radius $\lrate{\oc}\dvgradf+\frac{\errseq{\oc}}{\mingrad}$ centered at $\zdv{\oc}{\inode}$, it intersects with the feasible set $\F$.
\end{proof}

The following proposition establishes an upper bound on the number of inner iterations required at each node.
\begin{proposition}
\longthmtitle{Number of iterations in the inner descent loop is finite}
\label{prop:maxstep}
At a node $\inode\in\node$ and at the $\oc^{\text{th}}$ outer iteration of Algorithm~\ref{algo:dis_com}, we have $\maxstep{\inode}{\oc}\leq \frac{\dvgradg^2}{\errseq{\oc+1}^2}\pbrack{\lrate{\oc}\dvgradf+\frac{\errseq{\oc}}{\mingrad}}^2$, where $\maxstep{\inode}{\oc}$ is the number of inner iterations (Steps 8-13) for the node $\inode$ at an outer iteration $\oc$.
\end{proposition}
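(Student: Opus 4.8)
The plan is to analyze the inner loop (Steps 8--13) as a classical projected subgradient descent on the convex function $x \mapsto f(x, \buv{\inode}{\oc}{\ic})$ with the Polyak stepsize, run over the fixed feasible set $\ball{\oc}{\inode}$, and exploit Lemma~\ref{lem:ball} to supply a point in $\ball{\oc}{\inode}$ at which the objective is nonpositive. Fix an outer iteration $\oc$ and a node $\inode$, and suppose for contradiction that the while-loop executes at least $\ic$ times, i.e.\ $\confunc{}{\bzdv{\inode}{\oc}{\kappa},\buv{\inode}{\oc}{\kappa}} > \errseq{\oc+1}$ for all $\kappa \in \until{\ic}$. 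Pick $\hat{x} \in \ball{\oc}{\inode} \cap \F$ guaranteed by Lemma~\ref{lem:ball}; then $\confunc{}{\hat{x}, u} \le 0$ for all $u \in \uvset{}$, and in particular $\confunc{}{\hat{x}, \buv{\inode}{\oc}{\kappa}} \le 0$ for every $\kappa$.

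First I would write the standard one-step contraction for projected subgradient descent: since $\bzdv{\inode}{\oc}{\kappa+1}$ is the projection onto the convex set $\ball{\oc}{\inode}$ (which contains $\hat x$) of $\bzdv{\inode}{\oc}{\kappa} - \loclrate{\inode}{\oc}{\kappa}\gradg{}{\bzdv{\inode}{\oc}{\kappa},\buv{\inode}{\oc}{\kappa}}$, nonexpansiveness of the projection gives
\begin{equation*}
\norm{\bzdv{\inode}{\oc}{\kappa+1} - \hat x}^2 \le \norm{\bzdv{\inode}{\oc}{\kappa} - \hat x}^2 - 2\loclrate{\inode}{\oc}{\kappa}\inpr{\gradg{}{\bzdv{\inode}{\oc}{\kappa},\buv{\inode}{\oc}{\kappa}}}{\bzdv{\inode}{\oc}{\kappa} - \hat x} + \big(\loclrate{\inode}{\oc}{\kappa}\big)^2\norm{\gradg{}{\bzdv{\inode}{\oc}{\kappa},\buv{\inode}{\oc}{\kappa}}}^2.
\end{equation*}
By convexity of $f(\cdot, \buv{\inode}{\oc}{\kappa})$ and $\confunc{}{\hat x, \buv{\inode}{\oc}{\kappa}} \le 0$, the inner product is bounded below by $\confunc{}{\bzdv{\inode}{\oc}{\kappa},\buv{\inode}{\oc}{\kappa}} - \confunc{}{\hat x,\buv{\inode}{\oc}{\kappa}} \ge \confunc{}{\bzdv{\inode}{\oc}{\kappa},\buv{\inode}{\oc}{\kappa}}$. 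Substituting the Polyak stepsize $\loclrate{\inode}{\oc}{\kappa} = \confunc{}{\bzdv{\inode}{\oc}{\kappa},\buv{\inode}{\oc}{\kappa}}/\norm{\gradg{}{\bzdv{\inode}{\oc}{\kappa},\buv{\inode}{\oc}{\kappa}}}^2$ collapses the last two terms to $-\confunc{}{\bzdv{\inode}{\oc}{\kappa},\buv{\inode}{\oc}{\kappa}}^2/\norm{\gradg{}{\bzdv{\inode}{\oc}{\kappa},\buv{\inode}{\oc}{\kappa}}}^2$, so
\begin{equation*}
\norm{\bzdv{\inode}{\oc}{\kappa+1} - \hat x}^2 \le \norm{\bzdv{\inode}{\oc}{\kappa} - \hat x}^2 - \frac{\confunc{}{\bzdv{\inode}{\oc}{\kappa},\buv{\inode}{\oc}{\kappa}}^2}{\norm{\gradg{}{\bzdv{\inode}{\oc}{\kappa},\buv{\inode}{\oc}{\kappa}}}^2}.
\end{equation*}
Next I would lower-bound the subtracted term: the numerator exceeds $\errseq{\oc+1}^2$ by the standing while-loop hypothesis, and the denominator is at most $\dvgradg^2$ by Assumption~\ref{ad3}; hence each inner step decreases $\norm{\cdot - \hat x}^2$ by at least $\errseq{\oc+1}^2/\dvgradg^2$.

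Telescoping this inequality from $\kappa = 1$ to $\ic - 1$ yields $\norm{\bzdv{\inode}{\oc}{\ic} - \hat x}^2 \le \norm{\bzdv{\inode}{\oc}{1} - \hat x}^2 - (\ic - 1)\errseq{\oc+1}^2/\dvgradg^2$. Since $\bzdv{\inode}{\oc}{1} = \zdv{\oc}{\inode}$ and $\hat x \in \ball{\oc}{\inode}$, the radius of $\ball{\oc}{\inode}$ bounds $\norm{\bzdv{\inode}{\oc}{1} - \hat x} \le \lrate{\oc}\dvgradf + \errseq{\oc}/\mingrad$; combined with nonnegativity of the left side, this forces $(\ic - 1)\errseq{\oc+1}^2/\dvgradg^2 \le (\lrate{\oc}\dvgradf + \errseq{\oc}/\mingrad)^2$, i.e.\ $\ic \le 1 + \frac{\dvgradg^2}{\errseq{\oc+1}^2}(\lrate{\oc}\dvgradf + \errseq{\oc}/\mingrad)^2$. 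Taking $\ic = \maxstep{\inode}{\oc}$, the largest index actually attained, gives the claimed bound (possibly sharpening the stated $+1$ away by noting the final step also makes progress, or simply absorbing it). The one subtlety I would flag as the main obstacle is making sure the denominator $\norm{\gradg{}{\bzdv{\inode}{\oc}{\kappa},\buv{\inode}{\oc}{\kappa}}}^2$ is strictly positive so the Polyak stepsize is well defined at every executed inner step: as long as the while-loop condition holds, $\confunc{}{\bzdv{\inode}{\oc}{\kappa},\buv{\inode}{\oc}{\kappa}} > \errseq{\oc+1} > 0$, so $\bzdv{\inode}{\oc}{\kappa}$ is strictly infeasible, hence cannot be a minimizer of $\bconfunc$, so by Danskin's formula~\eqref{eq:sub} its gradient is nonzero — and in fact Assumption~\ref{ad3} bounds it above, which is all we need. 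I would state this positivity remark explicitly before dividing.
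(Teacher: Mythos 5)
Your proposal is correct and follows essentially the same route as the paper: the same one-step projected-gradient inequality with the Polyak stepsize, the same convexity bound using a point $\hat{x}\in\ball{\oc}{\inode}\cap\F$ supplied by Lemma~\ref{lem:ball}, and the same telescoping, with the paper merely phrasing the final count as $\maxstep{\inode}{\oc}\min_{\ic}\confunc{}{\bzdv{\inode}{\oc}{\ic},\buv{\inode}{\oc}{\ic}}^2\leq\dvgradg^2\norm{\hat{x}-\bzdv{\inode}{\oc}{1}}^2$ rather than via your per-step decrease of at least $\errseq{\oc+1}^2/\dvgradg^2$. To land exactly on the stated constant without the extra $+1$, telescope through the last executed step $\kappa=\maxstep{\inode}{\oc}$ (which also satisfies the while-condition), precisely as you anticipated.
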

\begin{proof}
For an arbitrary $\ic^{\text{th}}$ inner iteration of $\oc^{\text{th}}$ outer iteration, by adapting the inequality~\cite[Lemma 8.11]{AB:17} the projected gradient descent (Step 11 in Algorithm~\ref{algo:dis_com}) satisfies,
\begin{align*}
&2\loclrate{\inode}{\oc}{\ic}\langle \gradg{}{\bzdv{\inode}{\oc}{\ic}, \buv{\inode}{\oc}{\ic}}, \bzdv{\inode}{\oc}{\ic} - \dumx{}{} \rangle \leq \norm{\dumx{}{}-\bzdv{\inode}{\oc}{\ic}}^2 \\
&\hspace{50mm} -\norm{\dumx{}{}- \bzdv{\inode}{\oc}{\ic+1}}^2 + \pbrack{\loclrate{\inode}{\oc}{\ic}}^2 \norm{\gradg{}{\bzdv{\inode}{\oc}{\ic}, \buv{\inode}{\oc}{\ic}}}^{2},
\end{align*}
for all $\dumx{}{}\in\dvset{}$. Using the first-order convexity condition of $\confunc{}$ in the first variable and the fact that for $\dumx{}{}\in\F$ we have $\confunc{}{\dumx{}{},\uv{}{}}\leq 0$ for all $\uv{}{}\in\uvset{}$, we get
\begin{align*}
&2\loclrate{\inode}{\oc}{\ic}\confunc{}{\bzdv{\inode}{\oc}{\ic},\buv{\inode}{\oc}{\ic}}\leq \norm{\dumx{}{}-\bzdv{\inode}{\oc}{\ic}}^2  -\norm{\dumx{}{}- \bzdv{\inode}{\oc}{\ic+1}}^2 + \pbrack{\loclrate{\inode}{\oc}{\ic}}^2 \norm{\gradg{}{\bzdv{\inode}{\oc}{\ic}, \buv{\inode}{\oc}{\ic}}}^{2}.
\end{align*}
Substituting the value of $\loclrate{\inode}{\oc}{\ic}$ and rearranging the terms gives,
\begin{equation}
\label{eq:inner:itr}
\frac{\confunc{}{\bzdv{\inode}{\oc}{\ic},\buv{\inode}{\oc}{\ic}}^2}{\norm{\gradg{}{\bzdv{\inode}{\oc}{\ic}, \buv{\inode}{\oc}{\ic}}}^2}\leq \norm{\dumx{}{}-\bzdv{\inode}{\oc}{\ic}}^2  -\norm{\dumx{}{}- \bzdv{\inode}{\oc}{\ic+1}}^2.
\end{equation}
Summing the above inequality for all $\ic\in\until{\maxstep{\inode}{\oc}}$ and using the bound $\norm{\gradg{}{\bzdv{\inode}{\oc}{\ic}, \buv{\inode}{\oc}{\ic}}}\leq\dvgradg$ we obtain,
\begin{align*}
\begin{split}
\frac{1}{\dvgradg^2}\sum_{\ic\in\until{\maxstep{\inode}{\oc}}}\confunc{}{\bzdv{\inode}{\oc}{\ic},\buv{\inode}{\oc}{\ic}}^2\leq \norm{\dumx{}{}-\bzdv{\inode}{\oc}{1}}^2  -\norm{\dumx{}{}- \bzdv{\inode}{\oc}{\maxstep{\inode}{\oc}+1}}^2,
\end{split}
\end{align*}
implying,
\begin{align*}
\begin{split}
\maxstep{\inode}{\oc}\min_{\ic\in\until{\maxstep{\inode}{\oc}}}\confunc{}{\bzdv{\inode}{\oc}{\ic},\buv{\inode}{\oc}{\ic}}^2 &\overset{(a)}{\leq} \dvgradg^2 \norm{\dumx{}{}-\bzdv{\inode}{\oc}{1}}^2 \text{, and so,}\\
\min_{\ic\in\until{\maxstep{\inode}{\oc}}}\confunc{}{\bzdv{\inode}{\oc}{\ic},\buv{\inode}{\oc}{\ic}} &\leq \frac{\dvgradg \norm{\dumx{}{}-\bzdv{\inode}{\oc}{1}}}{\sqrt{\maxstep{\inode}{\oc}}},
\end{split}
\end{align*}
where (a) follows by replacing all terms on the left-hand side with the minimum value and removing the negative term from the right-hand side. Recall that for inner descent iterations to terminate at an arbitrary inner iteration $\bar{\ic}$, we require that $\confunc{}{\bzdv{\inode}{\oc}{\bar{\ic}},\buv{\inode}{\oc}{\bar{\ic}}}\leq \errseq{\oc+1}$. Using Lemma~\ref{lem:ball} we know that there exists a $\bar{\dumx{}{}}\in\F$ such that $\norm{\bar{\dumx{}{}}-\bzdv{\inode}{\oc}{1}}\leq\lrate{\oc}\dvgradf+\frac{\errseq{\oc}}{\mingrad}$, therefore by substituting $\dumx{}{}=\bar{\dumx{}{}}$ in the above inequality and upper bounding the right-hand side we get, $\min_{\ic\in\until{\maxstep{\inode}{\oc}}}\confunc{}{\bzdv{\inode}{\oc}{\ic},\buv{\inode}{\oc}{\ic}}\leq \errseq{\oc+1}$, where this  inequality follows from the lower bound for $\maxstep{\inode}{\oc}$ hypothesised in the statement.
\end{proof}

Lemma~\ref{lem:ball} and Proposition~\ref{prop:maxstep} established properties that depict the local behaviour of the algorithm at any given node. However, since the algorithm is distributed, its analysis requires us to consider the interaction between various nodes. This interaction in our algorithm takes place in Step 3 where nodes build a weighted average of the estimates of the their neighbors. The next task is to combine the preceding steps to demonstrate that our algorithm points in the direction of one of the optimizers of problem~\eqref{eq:sip_central} on average. 
\begin{lemma}
\longthmtitle{Bound on the cumulative inner product between the subgradient and the vector joining an optimizer to the estimate}
\label{lem:sum_grad}
For any $\dv{*}{}\in\solset$ and an arbitrary contiguous set of outer iterations $\dfset{}:=\setdef{\dthres, \dthres+1,\cdots,\kthres}$, the sequence $\seqdef{\cdv{\oc}{\inode}}{\oc\in\dfset{}}$ generated via Algorithm~\ref{algo:dis_com} at any node $\inode \in \node$ satisfies:
\begin{align}
\label{eq:sum_grad}
\begin{split}
&\sum_{\oc\in\dfset{}}\lrate{\oc} \sum_{\inode\in\node}\langle \gradf{\inode}{\cdv{\oc}{\inode}}, \cdv{\oc}{\inode} - \dv{*}{}\rangle \leq\frac{\tnode \setdia}{ 2}
+ \frac{\tnode\maxgrad^2}{2} \sum_{\oc\in\dfset{}} \pbrack{\lrate{\oc}^2 + \frac{\pbrack{\errseq{\oc}+\lrate{\oc}\dvgradf\mingrad}^2}{\mingrad^4}}.
\end{split}
\end{align}
\end{lemma}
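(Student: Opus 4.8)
The plan is to track a single ``Lyapunov''-type quantity, namely $\sum_{\inode\in\node}\norm{\dv{*}{}-\cdv{\oc+1}{\inode}}^2$ or, more conveniently, $\sum_{\inode\in\node}\norm{\dv{*}{}-\dv{\oc+1}{\inode}}^2$, across one outer iteration and then telescope over $\oc\in\dfset{}$. The single-iteration analysis decomposes into the three building blocks of the algorithm. First, the consensus step is benign for this particular quantity: since each $\wtmat{\oc}$ is doubly stochastic, $\sum_{\inode}\norm{\dv{*}{}-\cdv{\oc}{\inode}}^2 = \sum_{\inode}\norm{\dv{*}{}-\sum_{\jnode}\wtgraph{\oc}{\inode}{\jnode}\dv{\oc}{\jnode}}^2 \le \sum_{\inode}\norm{\dv{*}{}-\dv{\oc}{\inode}}^2$ by convexity of $\norm{\cdot}^2$ and column-stochasticity. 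So the averaging does not increase the aggregate distance to $\dv{*}{}$.

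Next, for the outer projected subgradient step I would use the standard projection inequality: since $\zdv{\oc}{\inode}=\proj{\dvset{}}{\cdv{\oc}{\inode}-\lrate{\oc}\gradf{\inode}{\cdv{\oc}{\inode}}}$ and $\dv{*}{}\in\dvset{}$, nonexpansiveness of the projection gives $\norm{\dv{*}{}-\zdv{\oc}{\inode}}^2 \le \norm{\dv{*}{}-\cdv{\oc}{\inode}}^2 - 2\lrate{\oc}\langle\gradf{\inode}{\cdv{\oc}{\inode}}, \cdv{\oc}{\inode}-\dv{*}{}\rangle + \lrate{\oc}^2\norm{\gradf{\inode}{\cdv{\oc}{\inode}}}^2$, and the last term is bounded by $\lrate{\oc}^2\maxgrad^2$ via Assumption~\ref{ad2}. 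This is exactly where the inner-product term in~\eqref{eq:sum_grad} comes from. Then the inner descent loop takes $\zdv{\oc}{\inode}$ to $\dv{\oc+1}{\inode}=\bzdv{\inode}{\oc}{\maxstep{\inode}{\oc}}$, and I need to control how much this can push the iterate away from $\dv{*}{}$. Since by Lemma~\ref{lem:ball} the ball $\ball{\oc}{\inode}$ meets $\F$, the whole inner loop stays inside $\ball{\oc}{\inode}$, a ball of radius $r_\oc:=\lrate{\oc}\dvgradf+\frac{\errseq{\oc}}{\mingrad}$ centered at $\zdv{\oc}{\inode}$; hence $\norm{\dv{\oc+1}{\inode}-\zdv{\oc}{\inode}}\le r_\oc$ and by the triangle inequality (or a crude expansion of the square) $\norm{\dv{*}{}-\dv{\oc+1}{\inode}}^2 \le \norm{\dv{*}{}-\zdv{\oc}{\inode}}^2 + 2 r_\oc\norm{\dv{*}{}-\zdv{\oc}{\inode}} + r_\oc^2$. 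To make this usable I would bound $\norm{\dv{*}{}-\zdv{\oc}{\inode}}\le\setdia$ using compactness of $\dvset{}$, so the inner-loop cost over one iteration is at most $2 r_\oc\setdia + r_\oc^2 \le (r_\oc+\setdia)^2 - \setdia^2$; one then checks this is dominated by a constant multiple of $\maxgrad^2 r_\oc^2$ plus absorbing terms, matching the $\frac{(\errseq{\oc}+\lrate{\oc}\dvgradf\mingrad)^2}{\mingrad^4}$ shape in the statement after writing $r_\oc^2 = \frac{(\errseq{\oc}+\lrate{\oc}\dvgradf\mingrad)^2}{\mingrad^2}$.

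Assembling these, one outer iteration yields $\sum_{\inode}\norm{\dv{*}{}-\dv{\oc+1}{\inode}}^2 \le \sum_{\inode}\norm{\dv{*}{}-\dv{\oc}{\inode}}^2 - 2\lrate{\oc}\sum_{\inode}\langle\gradf{\inode}{\cdv{\oc}{\inode}},\cdv{\oc}{\inode}-\dv{*}{}\rangle + \tnode\lrate{\oc}^2\maxgrad^2 + \tnode\bigl(2 r_\oc\setdia + r_\oc^2\bigr)$. Summing (telescoping) over $\oc\in\dfset{}$, the left telescopes to $\sum_{\inode}\norm{\dv{*}{}-\dv{\kthres+1}{\inode}}^2 \ge 0$, the initial term is bounded by $\tnode\setdia^2$, and rearranging gives $2\sum_{\oc\in\dfset{}}\lrate{\oc}\sum_{\inode}\langle\gradf{\inode}{\cdv{\oc}{\inode}},\cdv{\oc}{\inode}-\dv{*}{}\rangle \le \tnode\setdia^2 + \tnode\maxgrad^2\sum_{\oc}\lrate{\oc}^2 + \tnode\sum_{\oc}(2 r_\oc\setdia+r_\oc^2)$; dividing by $2$ and re-expressing $r_\oc$ produces~\eqref{eq:sum_grad}. (A small bookkeeping point: the stated right-hand side has $\frac{\tnode\setdia}{2}$ rather than $\frac{\tnode\setdia^2}{2}$, so I would double-check the precise way $\setdia$ enters — likely through a normalization or a redefinition — and align the inner-loop estimate accordingly; this is the kind of constant-chasing that has to be done carefully but poses no conceptual difficulty.)

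The main obstacle is the inner-descent bookkeeping: unlike the outer step, the inner loop is not a contraction toward $\dv{*}{}$, and the only handle on it is the geometric fact from Lemma~\ref{lem:ball} that all its iterates live in $\ball{\oc}{\inode}$. Converting ``the iterate moved by at most $r_\oc$'' into a clean additive term of the exact algebraic form appearing on the right-hand side of~\eqref{eq:sum_grad} — in particular getting the $\maxgrad$, the $\mingrad^4$ in the denominator, and the cross term $\lrate{\oc}\dvgradf\mingrad$ to line up — is where the care is needed. Everything else (consensus via double stochasticity, the projection inequality for the outer step, the telescoping) is routine.
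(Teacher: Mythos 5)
Your treatment of the consensus step and the outer projected subgradient step matches the paper's, but your handling of the inner descent loop contains a genuine gap that breaks the claimed bound. You control the inner loop purely geometrically: the final inner iterate lies in $\ball{\oc}{\inode}$, so it moved by at most $r_{\oc}:=\lrate{\oc}\dvgradf+\frac{\errseq{\oc}}{\mingrad}$ from $\zdv{\oc}{\inode}$, giving a per-iteration additive penalty of $2r_{\oc}\setdia+r_{\oc}^2$. The linear term $2r_{\oc}\setdia$ is fatal: it is \emph{not} dominated by a constant multiple of $r_{\oc}^2$ (since $r_{\oc}\to 0$, the linear term dominates the quadratic one), and with $r_{\oc}=\Theta(1/\sqrt{\oc})$ its sum over the window used in Theorem~\ref{thm:optimality} grows like $\sqrt{\threshold}$, whereas every term on the right-hand side of~\eqref{eq:sum_grad} is of order $\sum_{\oc}1/\oc$. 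After dividing by $\sum_{\oc}\lrate{\oc}=\Theta(\sqrt{\threshold})$, your extra term leaves a non-vanishing $O(1)$ contribution to the suboptimality, so neither the lemma as stated nor the downstream $\bigo{1/\sqrt{\threshold}}$ rate would follow.

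The missing idea is that the inner loop is Fej\'er-monotone with respect to \emph{every} feasible point, not merely confined to a ball. While the loop is active, $\bzdv{\inode}{\oc}{\ic}$ is infeasible, so for any $\dumx{}{}\in\F$ convexity of $\confunc{}$ in the first variable gives $\langle \gradg{}{\bzdv{\inode}{\oc}{\ic},\buv{\inode}{\oc}{\ic}},\bzdv{\inode}{\oc}{\ic}-\dumx{}{}\rangle\geq \confunc{}{\bzdv{\inode}{\oc}{\ic},\buv{\inode}{\oc}{\ic}}-\confunc{}{\dumx{}{},\buv{\inode}{\oc}{\ic}}\geq \confunc{}{\bzdv{\inode}{\oc}{\ic},\buv{\inode}{\oc}{\ic}}>0$. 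The paper therefore applies the standard projected-gradient inequality to each inner step, sums and telescopes over $\ic$, adds the result to the outer-step inequality, and simply drops these nonnegative inner products from the left-hand side; the only surviving inner-loop cost is the residual $\sum_{\ic}(\loclrate{\inode}{\oc}{\ic})^2\norm{\gradg{}{\bzdv{\inode}{\oc}{\ic},\buv{\inode}{\oc}{\ic}}}^2$, which with the Polyak stepsize and~\eqref{eq:inner:itr} is bounded by $\frac{1}{\mingrad^2}\norm{\bar{\dumx{}{}}-\zdv{\oc}{\inode}}^2\leq\frac{r_{\oc}^2}{\mingrad^2}$ for the feasible point $\bar{\dumx{}{}}$ supplied by Lemma~\ref{lem:ball}. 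That is what produces the purely quadratic term $\frac{\pbrack{\errseq{\oc}+\lrate{\oc}\dvgradf\mingrad}^2}{\mingrad^4}$ in~\eqref{eq:sum_grad} with no linear leakage. (Your side remark about $\tnode\setdia$ versus $\tnode\setdia^2$ is fair --- the telescoped initial distance naturally gives $\tnode\setdia^2$ --- but that is constant bookkeeping, not the substantive problem.)
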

\begin{proof}
For an arbitrary $\oc^{\text{th}}$ outer iteration, adapting the fundamental inequality~\cite[Lemma 8.11]{AB:17} of projected subgradient descent (Step 4 in Algorithm~\ref{algo:dis_com}) to our case yields 
\begin{align}
\label{eq:pgd1}
\begin{split}
&2\lrate{\oc} \langle \gradf{\inode}{\cdv{\oc}{\inode}}, \cdv{\oc}{\inode} - \dumx{}{}\rangle \leq \norm{\dumx{}{} - \cdv{\oc}{\inode}}^2 - \norm{\dumx{}{} - \zdv{\oc}{\inode}}^2 + \lrate{\oc}^2 \norm{\gradf{\inode}{\cdv{\oc}{\inode}}}^2,  
\end{split}
\end{align}
for any $\dumx{}{}\in\F$. Similarly, for an arbitrary $\ic^{\text{th}}$ inner iteration of $\oc^{\text{th}}$ outer iteration, the projected gradient descent (Step 10 in Algorithm~\ref{algo:dis_com}) satisfies,
\begin{align}
\label{eq:pgd2}
\begin{split}
&2\loclrate{\inode}{\oc}{\ic}\langle \gradg{}{\bzdv{\inode}{\oc}{\ic}, \buv{\inode}{\oc}{\ic}}, \bzdv{\inode}{\oc}{\ic} - \dumx{}{} \rangle \leq \norm{\dumx{}{}-\bzdv{\inode}{\oc}{\ic}}^2 \\
&\hspace{45mm} -\norm{\dumx{}{}- \bzdv{\inode}{\oc}{\ic+1}}^2 + \pbrack{\loclrate{\inode}{\oc}{\ic}}^2 \norm{\gradg{}{\bzdv{\inode}{\oc}{\ic}, \buv{\inode}{\oc}{\ic}}}^{2},
\end{split}
\end{align}
for any $\dumx{}{}\in\F$. Now, by summing~\eqref{eq:pgd2} (and telescoping the norms) over all the inner iterations, i.e., for all $\ic\in\until{\maxstep{\inode}{\oc}}$ and adding this sum to~\eqref{eq:pgd1}, we get,
\begin{align*}
&2\lrate{\oc} \langle \gradf{\inode}{\cdv{\oc}{\inode}}, \cdv{\oc}{\inode} - \dumx{}{}\rangle +\sum_{\ic\in\until{\maxstep{\inode}{\oc}}}2\loclrate{\inode}{\oc}{\ic}\langle \gradg{}{\bzdv{\inode}{\oc}{\ic}, \buv{\inode}{\oc}{\ic}}, \bzdv{\inode}{\oc}{\ic} - \dumx{}{}\rangle \\ 
&\hspace{5mm}\leq \norm{\dumx{}{}-\cdv{\oc}{\inode}}^2 -\norm{\dumx{}{}- \bzdv{\inode}{\oc}{\maxstep{\inode}{\oc}+1}}^2  +\lrate{\oc}^2 \norm{\gradf{\inode}{\cdv{\oc}{\inode}}}^2 + \sum_{\ic\in\until{\maxstep{\inode}{\oc}}}\pbrack{\loclrate{\inode}{\oc}{\ic}}^2 \norm{\gradg{}{\bzdv{\inode}{\oc}{\ic}, \buv{\inode}{\oc}{\ic}}}^{2}.
\end{align*}
Consider the term $\langle \gradg{}{\bzdv{\inode}{\oc}{\ic}, \buv{\inode}{\oc}{\ic}}, \bzdv{\inode}{\oc}{\ic} - \dumx{}{}\rangle$. Using convexity of $\confunc{}$ in the first variable, we have $\langle \gradg{}{\bzdv{\inode}{\oc}{\ic}, \buv{\inode}{\oc}{\ic}}, \bzdv{\inode}{\oc}{\ic} - \dumx{}{}\rangle \geq \confunc{}{\bzdv{\inode}{\oc}{\ic},\buv{\inode}{\oc}{\ic}} - \confunc{}{\dumx{}{},\buv{\inode}{\oc}{\ic}} \geq \confunc{}{\bzdv{\inode}{\oc}{\ic},\buv{\inode}{\oc}{\ic}}\geq 0$, because $\bzdv{\inode}{\oc}{\ic}\notin \F$ and $\dumx{}{}\in\F$. Therefore, by removing the non-negative terms from the left-hand side we have
\begin{align*}
&2\lrate{\oc}\langle \gradf{\inode}{\cdv{\oc}{\inode}}, \cdv{\oc}{\inode} - \dumx{}{}\rangle \\
&\leq \norm{\dumx{}{}- \cdv{\oc}{\inode}}^2 -\norm{\dumx{}{}- \bzdv{\inode}{\oc}{\maxstep{\inode}{\oc}+1}}^2 +\lrate{\oc}^2 \norm{\gradf{\inode}{\cdv{\oc}{\inode}}}^2 + \sum_{\ic\in\until{\maxstep{\inode}{\oc}}}\pbrack{\loclrate{\inode}{\oc}{\ic}}^2 \norm{\gradg{}{\bzdv{\inode}{\oc}{\ic}, \buv{\inode}{\oc}{\ic}}}^{2} \\
&\overset{(a)}{\leq} \normb{\dumx{}{}- \sum_{\jnode\in\node}\wtgraph{\oc}{\inode}{\jnode} \dv{\oc}{\jnode}}^2 -\norm{\dumx{}{}- \dv{\oc+1}{\inode}}^2+\lrate{\oc}^2 \dvgradf^2 + \sum_{\ic\in\until{\maxstep{\inode}{\oc}}}\pbrack{\loclrate{\inode}{\oc}{\ic}}^2 \dvgradg^{2} \\
&\overset{(b)}{\leq} \sum_{\jnode\in\node}\wtgraph{\oc}{\inode}{\jnode} \norm{\dumx{}{}- \dv{\oc}{\jnode}}^2 -\norm{\dumx{}{}- \dv{\oc+1}{\inode}}^2+\lrate{\oc}^2 \dvgradf^2 + \sum_{\ic\in\until{\maxstep{\inode}{\oc}}}\pbrack{\loclrate{\inode}{\oc}{\ic}}^2 \dvgradg^{2}, 
\end{align*}
where (a) follows from the bounds on the gradients and definitions for different terms in Algorithm~\ref{algo:dis_com}, and (b) follows from the convexity of the square of the norm. Let $\dumx{}{}=\dv{*}{}\in\solset$, then by summing over all the nodes in $\node$ we get,
\begin{align*}
&2\sum_{\inode\in\node}\lrate{\oc}\langle \gradf{\inode}{\cdv{\oc}{\inode}}, \cdv{\oc}{\inode} - \dv{*}{}\rangle \\
&\leq \sum_{\inode\in\node}\pbrack{\sum_{\jnode\in\node}\wtgraph{\oc}{\inode}{\jnode} \norm{\dv{*}{}- \dv{\oc}{\jnode}}^2 -\norm{\dv{*}{}- \dv{\oc+1}{\inode}}^2} +\sum_{\inode\in\node}\pbrack{\lrate{\oc}^2 \dvgradf^2 + \sum_{\ic\in\until{\maxstep{\inode}{\oc}}}\pbrack{\loclrate{\inode}{\oc}{\ic}}^2 \dvgradg^{2}}  \\
&= \sum_{\jnode\in\node}\sum_{\inode\in\node}\wtgraph{\oc}{\inode}{\jnode} \norm{\dv{*}{}- \dv{\oc}{\jnode}}^2 -\sum_{\jnode\in\node}\norm{\dv{*}{}- \dv{\oc+1}{\jnode}}^2+\tnode\lrate{\oc}^2 \dvgradf^2 + \tnode\sum_{\ic\in\until{\maxstep{\inode}{\oc}}}\pbrack{\loclrate{\inode}{\oc}{\ic}}^2 \dvgradg^{2} \\
&\overset{(c)}{\leq} \sum_{\jnode\in\node} \pbrack{\norm{\dv{*}{}- \dv{\oc}{\jnode}}^2 -\norm{\dv{*}{}- \dv{\oc+1}{\jnode}}^2} +\tnode\lrate{\oc}^2 \dvgradf^2 + \tnode\sum_{\ic\in\until{\maxstep{\inode}{\oc}}}\pbrack{\loclrate{\inode}{\oc}{\ic}}^2 \dvgradg^{2}, 
\end{align*}
where (c) is due to the double stochastic nature of the weight matrix (i.e., $\sum_{\inode\in\node}\wtgraph{\oc}{\inode}{\jnode}=1$). Now sum over $\oc\in\dfset{}$ and obtain
\begin{align*}
&2\sum_{\oc\in\dfset{}} \sum_{\inode\in\node}\lrate{\oc}\langle \gradf{\inode}{\cdv{\oc}{\inode}}, \cdv{\oc}{\inode} - \dv{*}{}\rangle  \\
&\leq \sum_{\oc\in\dfset{}} \sum_{\inode\in\node} \pbrack{\norm{\dv{*}{}- \dv{\oc}{\inode}}^2 -\norm{\dv{*}{}- \dv{\oc+1}{\inode}}^2}+ \tnode\sum_{\oc\in\dfset{}} \pbrack{\lrate{\oc}^2 \dvgradf^2 + \sum_{\ic\in\until{\maxstep{\inode}{\oc}}}\pbrack{\loclrate{\inode}{\oc}{\ic}}^2 \dvgradg^{2}}  \\
&\leq \sum_{\inode\in\node}\pbrack{\norm{\dv{*}{}- \dv{\dthres}{\inode}}^2 -\norm{\dv{*}{}- \dv{\kthres+1}{\inode}}^2} + \tnode\maxgrad^2\sum_{\oc\in\dfset{}} \pbrack{\lrate{\oc}^2 + \sum_{\ic\in\until{\maxstep{\inode}{\oc}}}\frac{\confunc{}{\bzdv{\inode}{\oc}{\ic},\buv{\inode}{\oc}{\ic}}^2}{\norm{\gradg{}{\bzdv{\inode}{\oc}{\ic}, \buv{\inode}{\oc}{\ic}}}^4}}. \\
\end{align*}
Using the bound on the gradient of the constraint function and~\eqref{eq:inner:itr}, we can write the following for any $\dumx{}{}\in\F$
\begin{align*}
2\sum_{\oc\in\dfset{}} \sum_{\inode\in\node}&\lrate{\oc}\langle \gradf{\inode}{\cdv{\oc}{\inode}}, \cdv{\oc}{\inode} - \dv{*}{}\rangle \\
&\hspace{0mm}\leq \tnode \setdia + \tnode\maxgrad^2\sum_{\oc\in\dfset{}} \biggl(\lrate{\oc}^2 +\frac{1} {\mingrad^2}\sum_{\ic\in\until{\maxstep{\inode}{\oc}}} \left(\norm{\dumx{}{}-\bzdv{\inode}{\oc}{\ic}}^2-\norm{\dumx{}{}- \bzdv{\inode}{\oc}{\ic+1}}^2\right)\biggr),\\
&\hspace{0mm}\overset{(d)}{\leq} \tnode \setdia + \tnode\maxgrad^2\sum_{\oc\in\dfset{}} \biggl(\lrate{\oc}^2 +\frac{1} {\mingrad^2} \left(\norm{\dumx{}{}-\bzdv{\inode}{\oc}{1}}^2-\norm{\dumx{}{}- \bzdv{\inode}{\oc}{\maxstep{\inode}{\oc}+1}}^2\right)\biggr),\\
&\hspace{0mm}\overset{(e)}{\leq} \tnode \setdia + \tnode\maxgrad^2\sum_{\oc\in\dfset{}} \biggl(\lrate{\oc}^2 +\frac{1} {\mingrad^2} \norm{\dumx{}{}-\bzdv{\inode}{\oc}{1}}^2\biggr),\\
\end{align*}
where (d) follows by telescoping the sums and (e) follows by removing the negative norm term from the upper bound. Using Lemma~\ref{lem:ball} we know that there exists a $\bar{\dumx{}{}}\in\F$ such that $\norm{\bar{\dumx{}{}}-\bzdv{\inode}{\oc}{1}}\leq\lrate{\oc}\dvgradf+\frac{\errseq{\oc}}{\mingrad}$, therefore by substituting $\dumx{}{}=\bar{\dumx{}{}}$ in the above inequality and upper bounding the right-hand side we get the required bound.
\end{proof}
The primary objective of our convergence analysis is to demonstrate that all nodes converge to the same estimate of the optimizer of problem~\eqref{eq:sip_central}; that is, they achieve consensus on this value. Consensus among nodes of a network is a well-studied topic, and we draw upon the standard results from~\cite{AN-AO:09}, adapting them to analyze our approach. In particular, we rely on the following lemma based on~\cite[Proposition 1]{AN-AO:09} as a basis to argue that estimates at different nodes of the network achieve consensus. Under the assumed properties of the network, this result provides a bound that characterizes the convergence behaviour of the entries of the network transition matrix.
\begin{lemma}
\longthmtitle{Convergence of network transition matrix~\cite[Lemma 1]{JL-GL-ZW-CW:18}}
\label{lem:graph_flow}
Let Assumption~\ref{asmp:graph} hold. Then, for all iterations $t > s \geq 1$ and all nodes $\inode, \jnode \in \node$, the following inequality holds:
\begin{equation}
\label{eq:tranmat}
    \abs{\until{\graphflow{t}{s}}_{\inode\jnode}-\frac{1}{\tnode}}\leq \nodepar \gphpar^{t-s},
\end{equation}
where $\nodepar = \pbrack{1-\frac{\minwt}{4\tnode^2}}^{-2}$ and $\gphpar=\pbrack{1-\frac{\minwt}{4\tnode^2}}^{\frac{1}{\strong}}$.
\end{lemma}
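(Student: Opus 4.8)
The plan is to retrace the classical ergodicity argument for backward products of doubly stochastic matrices from~\cite{AN-AO:09}, in the sharpened form of~\cite{JL-GL-ZW-CW:18}; since~\eqref{eq:tranmat} is stated verbatim in the latter, in the paper it suffices to invoke~\cite[Lemma 1]{JL-GL-ZW-CW:18}, but a self-contained derivation goes as follows. By~\ref{gd1} every $\wtmat{\kappa}$, and hence every product $\graphflow{t}{s}$, is doubly stochastic, so $\graphflow{t}{s}\mathbf{1}=\mathbf{1}$, $\mathbf{1}^\top\graphflow{t}{s}=\mathbf{1}^\top$, and $\frac{1}{\tnode}\mathbf{1}\mathbf{1}^\top$ is the natural rank-one would-be limit. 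Introduce the deviation $D(t,s):=\graphflow{t}{s}-\frac{1}{\tnode}\mathbf{1}\mathbf{1}^\top$; it obeys the recursion $D(t,s)=\wtmat{t}D(t-1,s)$ with $D(t,s)\mathbf{1}=0$ and $\mathbf{1}^\top D(t,s)=0$, so bounding the entries of $D(t,s)$ amounts to controlling how $\graphflow{t}{s}$ acts on mean-zero vectors.

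The first substantive step shows that a product over one window of length $\strong$ \emph{scrambles}: combining~\ref{gd2} (positive weights are at least $\minwt$) with~\ref{gd3} (the union graph $\graph{\kappa}_{\strong}$ is strongly connected) and the standard property that each node keeps positive weight on itself, one traces paths through $\graph{\kappa}_{\strong}$ to conclude that every column of $\graphflow{\kappa+\strong-1}{\kappa}$ has an entry bounded below by a positive constant, which in turn lets a Dobrushin/Hajnal-type coefficient of ergodicity of $\graphflow{\kappa+\strong-1}{\kappa}$ be bounded by $1-\frac{\minwt}{4\tnode^2}$. The second step is linear-algebraic: such a bound gives $\norm{\graphflow{\kappa+\strong-1}{\kappa}v}\le\pbrack{1-\frac{\minwt}{4\tnode^2}}\norm{v}$ for all $v$ with $\mathbf{1}^\top v=0$. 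Composing this estimate over the $\grtint{(t-s)/\strong}$ disjoint length-$\strong$ windows inside $\{s,\dots,t\}$ produces geometric decay at the per-iteration rate $\gphpar=\pbrack{1-\frac{\minwt}{4\tnode^2}}^{1/\strong}$, while the leftover partial window together with the crude estimate $\norm{D(\cdot,\cdot)}\le 1$ used to initialize the telescoping is absorbed into the prefactor $\nodepar=\pbrack{1-\frac{\minwt}{4\tnode^2}}^{-2}$. Reading off the $(\inode,\jnode)$ entry of $D(t,s)$ from the resulting operator-norm bound yields~\eqref{eq:tranmat}.

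The delicate part is the constant chasing: pinning down the factor $\frac{1}{4\tnode^2}$ in the ergodicity coefficient — it comes from pairing the $\minwt$-floor on one coordinate per column against the worst-case spread of a mean-zero vector over $\tnode$ coordinates — and handling the fact that $t-s$ need not be a multiple of $\strong$; the latter boundary effect is exactly why the prefactor carries exponent $-2$ rather than $-1$. All of this is routine and already carried out in~\cite{AN-AO:09,JL-GL-ZW-CW:18}, so no new idea is required.
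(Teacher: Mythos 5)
The paper offers no proof of this lemma: it is imported directly from \cite[Lemma 1]{JL-GL-ZW-CW:18} (itself built on \cite[Proposition 1]{AN-AO:09}), so your remark that invoking that citation suffices is exactly what the paper does. Your supplementary sketch has the right architecture of the standard argument --- contraction of the mean-zero part of $\graphflow{t}{s}$ over windows of length $\strong$, geometric composition across windows, and a leftover boundary window absorbed into the prefactor $\nodepar$ --- and the one genuinely delicate step you gloss over (that a single $\strong$-window already contracts the spread by $1-\frac{\minwt}{4\tnode^2}$, which rests essentially on double stochasticity rather than on a plain Dobrushin scrambling bound, since a product of only $\strong$ matrices need not have all entries positive) is precisely the content of the cited lemma, to whose constant-chasing you correctly defer.
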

To track the progression of consensus among the estimates at each node, we define the following network-wide average of the estimates 
\begin{equation}\label{eq:avg-estimate}
    \hdv{\oc}{}:=\frac{1}{\tnode}\sum_{\inode\in\node}\dv{\oc}{\inode},
\end{equation}
and analyze how individual estimates evolve relative to this average. We next present Lemma~\ref{lem:consensus}, which establishes an upper bound on the difference between the network-wide average of estimates of the optimizer (i.e., $\hdv{\oc}{}$) and the estimate at an arbitrary node. This bound serves as a measure of consensus across the network, and it vanishes as the number of outer iterations increases, indicating convergence of all estimates to a common value.

\begin{lemma}
\longthmtitle{Bound on the disagreement of a node's estimate with the network-wide average of the estimate}
\label{lem:consensus}
At any node $\inode\in\node$ and at the $\oc^{\text{th}}$ outer iteration of Algorithm~\ref{algo:dis_com}, we have
\begin{equation}
\label{eq:consensus}
    \norm{\hdv{\oc+1}{}-\dv{\oc+1}{\inode}} \leq \conbound{\oc}, 
\end{equation}
where, 
\begin{align*}
&\conbound{\oc}:=\tnode\nodepar\sum_{s=1}^{\oc-1} \gphpar^{\oc-s-1}\pbrack{2\lrate{s} \dvgradf + \frac{\errseq{s}}{\mingrad}} + \nodepar \gphpar^{\oc-1} \sum_{\jnode\in\node} \norm{\dv{1}{\jnode}} + 2\pbrack{2\lrate{\oc} \dvgradf + \frac{\errseq{\oc}}{\mingrad}},
\end{align*}
with, $\nodepar = \pbrack{1-\frac{\minwt}{4\tnode^2}}^{-2}$ and $\gphpar=\pbrack{1-\frac{\minwt}{4\tnode^2}}^{\frac{1}{\strong}}$. As a consequence, we have \(\norm{\dv{\oc+1}{\jnode}-\dv{\oc+1}{\inode}}\leq 2\conbound{\oc}\).
\end{lemma}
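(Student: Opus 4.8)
The plan is to recast Algorithm~\ref{algo:dis_com} as a \emph{perturbed consensus iteration} and then invoke the transition-matrix estimate of Lemma~\ref{lem:graph_flow}, mirroring the analysis of~\cite{AN-AO:09}. Set $e_{\inode}^{\oc}:=\dv{\oc+1}{\inode}-\cdv{\oc}{\inode}$, the net displacement that node~$\inode$ applies to its weighted average during outer iteration $\oc$; then Step~3 and the definition of $\dv{\oc+1}{\inode}$ give the recursion $\dv{\oc+1}{\inode}=\sum_{\jnode\in\node}\wtgraph{\oc}{\inode}{\jnode}\dv{\oc}{\jnode}+e_{\inode}^{\oc}$. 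The first step is to bound $\norm{e_{\inode}^{\oc}}$. By Lemma~\ref{lem:loc_consensus} and Assumption~\ref{ad2}, $\norm{\cdv{\oc}{\inode}-\zdv{\oc}{\inode}}\le\lrate{\oc}\dvgradf$. Moreover, every iterate produced inside the while-loop is, by Step~10, a point of the ball $\ball{\oc}{\inode}$ centred at $\zdv{\oc}{\inode}$ with radius $\lrate{\oc}\dvgradf+\frac{\errseq{\oc}}{\mingrad}$, and if the loop does not execute then $\dv{\oc+1}{\inode}=\zdv{\oc}{\inode}$ is its centre; hence $\norm{\dv{\oc+1}{\inode}-\zdv{\oc}{\inode}}\le\lrate{\oc}\dvgradf+\frac{\errseq{\oc}}{\mingrad}$. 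The triangle inequality then gives the key estimate $\norm{e_{\inode}^{\oc}}\le 2\lrate{\oc}\dvgradf+\frac{\errseq{\oc}}{\mingrad}$.

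Next I would unfold the recursion. With $\graphflow{\oc}{s}=\wtmat{\oc}\wtmat{\oc-1}\cdots\wtmat{s}$, repeated substitution yields, for every $\oc\ge 1$,
\[
\dv{\oc+1}{\inode}=\sum_{\jnode\in\node}\until{\graphflow{\oc}{1}}_{\inode\jnode}\dv{1}{\jnode}+\sum_{s=1}^{\oc-1}\sum_{\jnode\in\node}\until{\graphflow{\oc}{s+1}}_{\inode\jnode}e_{\jnode}^{s}+e_{\inode}^{\oc}.
\]
Since each $\wtmat{\oc}$ is doubly stochastic by~\ref{gd1}, so is every product $\graphflow{\oc}{s}$, and averaging the identity over $\inode\in\node$ collapses all mixing terms; recalling~\eqref{eq:avg-estimate}, this leaves $\hdv{\oc+1}{}=\frac{1}{\tnode}\sum_{\jnode\in\node}\dv{1}{\jnode}+\sum_{s=1}^{\oc}\frac{1}{\tnode}\sum_{\jnode\in\node}e_{\jnode}^{s}$. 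Subtracting the two displays expresses the disagreement as
\[
\hdv{\oc+1}{}-\dv{\oc+1}{\inode}=\sum_{\jnode\in\node}\pbrack{\frac{1}{\tnode}-\until{\graphflow{\oc}{1}}_{\inode\jnode}}\dv{1}{\jnode}+\sum_{s=1}^{\oc-1}\sum_{\jnode\in\node}\pbrack{\frac{1}{\tnode}-\until{\graphflow{\oc}{s+1}}_{\inode\jnode}}e_{\jnode}^{s}+\pbrack{\frac{1}{\tnode}\sum_{\jnode\in\node}e_{\jnode}^{\oc}-e_{\inode}^{\oc}},
\]
the last ``un-mixed'' group arising because $e_{\inode}^{\oc}$ enters after the final consensus step and so passes through no transition matrix.

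The concluding step is to take norms and apply the triangle inequality group by group. Lemma~\ref{lem:graph_flow} bounds $\abs{\frac{1}{\tnode}-\until{\graphflow{\oc}{s+1}}_{\inode\jnode}}\le\nodepar\gphpar^{\oc-s-1}$ and $\abs{\frac{1}{\tnode}-\until{\graphflow{\oc}{1}}_{\inode\jnode}}\le\nodepar\gphpar^{\oc-1}$, using the trivial bound $\abs{\until{\wtmat{\oc}}_{\inode\jnode}-\frac{1}{\tnode}}\le 1\le\nodepar$ for the boundary cases in which a product reduces to a single $\wtmat{\cdot}$; the perturbation estimate of the first paragraph gives $\sum_{\jnode\in\node}\norm{e_{\jnode}^{s}}\le\tnode\pbrack{2\lrate{s}\dvgradf+\frac{\errseq{s}}{\mingrad}}$ and $\frac{1}{\tnode}\sum_{\jnode\in\node}\norm{e_{\jnode}^{\oc}}+\norm{e_{\inode}^{\oc}}\le 2\pbrack{2\lrate{\oc}\dvgradf+\frac{\errseq{\oc}}{\mingrad}}$. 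Collecting the three groups reproduces exactly $\conbound{\oc}$, which proves~\eqref{eq:consensus}; the stated consequence follows at once from $\norm{\dv{\oc+1}{\jnode}-\dv{\oc+1}{\inode}}\le\norm{\dv{\oc+1}{\jnode}-\hdv{\oc+1}{}}+\norm{\hdv{\oc+1}{}-\dv{\oc+1}{\inode}}\le 2\conbound{\oc}$. Apart from the perturbation bound and the routine geometric-series bookkeeping, the one point that needs care is the time-shift in the transition matrices — the perturbation $e_{\jnode}^{s}$ created at iteration $s$ is subsequently mixed by $\wtmat{\oc}\cdots\wtmat{s+1}=\graphflow{\oc}{s+1}$, not by $\graphflow{\oc}{s}$ — together with the observation that the most recent perturbation escapes mixing altogether, which is exactly why the final summand of $\conbound{\oc}$ carries the constant factor $2$ rather than a geometrically decaying coefficient.
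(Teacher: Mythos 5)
Your proposal is correct and follows essentially the same route as the paper: unroll Step~3 as a perturbed consensus recursion with perturbations $\dv{s+1}{\jnode}-\cdv{s}{\jnode}$, bound each perturbation by $2\lrate{s}\dvgradf+\frac{\errseq{s}}{\mingrad}$ via Lemma~\ref{lem:loc_consensus} and the radius of the ball from Lemma~\ref{lem:ball}, and control the mixing coefficients with Lemma~\ref{lem:graph_flow}. Your explicit handling of the boundary case where $\graphflow{\oc}{s+1}$ degenerates to a single $\wtmat{\cdot}$ is a small extra care the paper leaves implicit, but it does not change the argument.
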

\begin{proof}
The proof is based on~\cite[Lemma 4]{JL-GL-ZW-CW:18}. The following recursion can be obtained for Step 3 of Algorithm~\ref{algo:dis_com} using the network transition matrix,
\begin{align*}
\cdv{\oc}{\inode} = \sum_{\jnode\in\node} \until{\graphflow{\oc}{1}}_{\inode\jnode}\dv{1}{\jnode} + \sum_{s=1}^{k-1}\sum_{\jnode\in\node} \until{\graphflow{\oc}{s+1}}_{\inode\jnode}(\dv{s+1}{\inode}-\cdv{s}{\inode}).
\end{align*}
By adding $\dv{\oc+1}{\inode}$ to both sides, we write
\begin{align}
\label{eq:consensus1}
&\dv{\oc+1}{\inode} = \sum_{\jnode\in\node} \until{\graphflow{\oc}{1}}_{\inode\jnode}\dv{1}{\jnode} +\sum_{s=1}^{k-1}\sum_{\jnode\in\node} \until{\graphflow{\oc}{s+1}}_{\inode\jnode}(\dv{s+1}{\inode}-\cdv{s}{\inode})+\dv{\oc+1}{\inode}-\cdv{\oc}{\inode}.
\end{align}
Using the above equality and the fact that adjacency matrix is doubly stochastic at all times, we write the network-wide average $\hdv{\oc+1}{}$ as
\begin{align}
\label{eq:consensus2}
\begin{split}
\hdv{\oc+1}{} &= \frac{1}{\tnode}\sum_{\inode\in\node} \dv{\oc+1}{\inode}\\
&=\frac{1}{\tnode}\sum_{\jnode\in\node}\dv{1}{\jnode} + \frac{1}{\tnode}\sum_{s=1}^{k-1}\sum_{\jnode\in\node}(\dv{s+1}{\inode}-\cdv{s}{\inode})+\frac{1}{\tnode}\sum_{\inode\in\node}(\dv{\oc+1}{\inode}-\cdv{\oc}{\inode}).
\end{split}
\end{align}
Now by subtracting~\eqref{eq:consensus1} from~\eqref{eq:consensus2} and taking norm on both sides, we get the following using Holder's inequality:
\begin{align*}
&\norm{\hdv{\oc+1}{}-\dv{\oc+1}{\inode}} \leq \sum_{\jnode\in\node} \abs{\until{\graphflow{\oc}{1}}_{\inode\jnode}-\frac{1}{\tnode}}\norm{\dv{1}{\jnode}} + \sum_{s=1}^{\oc-1}\sum_{\jnode\in\node} \abs{\until{\graphflow{\oc}{s+1}}_{\inode\jnode}-\frac{1}{\tnode}}\norm{\cdv{s}{\jnode}-\dv{s+1}{\jnode}} \\
&\hspace{52mm}+\frac{1}{\tnode}\sum_{\jnode\in\node}\norm{\cdv{\oc}{\jnode}-\dv{\oc+1}{\jnode}} + \norm{\cdv{\oc}{\inode}-\dv{\oc+1}{\inode}}.
\end{align*}
Consequently,
\begin{align*}
&\norm{\hdv{\oc+1}{}-\dv{\oc+1}{\inode}} \\
&\overset{(a)}{\leq} \nodepar \gphpar^{\oc-1} \sum_{\jnode\in\node} \norm{\dv{1}{\jnode}} + \sum_{s=1}^{\oc-1} \nodepar \gphpar^{\oc-s-1} \sum_{\jnode\in\node}\norm{\cdv{s}{\jnode}-\dv{s+1}{\jnode}} \\
&\hspace{6mm}+\frac{1}{\tnode}\sum_{\jnode\in\node}\norm{\cdv{\oc}{\jnode}-\dv{\oc+1}{\jnode}} + \norm{\cdv{\oc}{\inode}-\dv{\oc+1}{\inode}}\\
&\overset{(b)}{\leq} \sum_{s=1}^{\oc-1} \nodepar \gphpar^{\oc-s-1} \sum_{\jnode\in\node}\pbrack{\norm{\cdv{s}{\jnode}-\zdv{s}{\jnode}}+\norm{\zdv{s}{\jnode}-\dv{s+1}{\jnode}}} \\
&\hspace{6mm}+\nodepar \gphpar^{\oc-1} \sum_{\jnode\in\node} \norm{\dv{1}{\jnode}}+ \frac{1}{\tnode}\sum_{\jnode\in\node}\pbrack{\norm{\cdv{\oc}{\jnode}-\zdv{\oc}{\jnode}}+\norm{\zdv{\oc}{\jnode}-\dv{\oc+1}{\jnode}}} \\
&\hspace{6mm}+ \norm{\cdv{\oc}{\inode}-\zdv{\oc}{\inode}}+\norm{\zdv{\oc}{\inode}-\dv{\oc+1}{\inode}}\\
&\overset{(c)}{\leq}  \tnode\nodepar\sum_{s=1}^{\oc-1} \gphpar^{\oc-s-1}\pbrack{2\lrate{s} \dvgradf + \frac{\errseq{s}}{\mingrad}} + \nodepar \gphpar^{\oc-1} \sum_{\jnode\in\node} \norm{\dv{1}{\jnode}} + 2\pbrack{2\lrate{\oc} \dvgradf + \frac{\errseq{\oc}}{\mingrad}},
\end{align*}
where (a) follows via Lemma~\ref{lem:graph_flow}, (b) is obtained via the use of triangle inequality, and (c) is due to the upper bound provided in Lemma~\ref{lem:loc_consensus} and the radius of the closed ball in Lemma~\ref{lem:ball}.
\end{proof}
Using the bound provided in the above lemma, the result in~\cite[Lemma 3.1]{SSR-AN-VVV:10} and since both the stepsize and the feasibility tolerance sequence approach zero, we deduce that $\conbound{\oc} \to 0$ for $\oc\to\infty$. Consequently, for Algorithm~\ref{algo:dis_com}, we obtain $\lim_{\oc\to\infty}\norm{\hdv{\oc+1}{}-\dv{\oc+1}{\inode}}= 0$ and  $\lim_{\oc\to\infty}\norm{\dv{\oc+1}{\jnode}-\dv{\oc+1}{\inode}}= 0$. 
This concludes the preliminary results which form the foundation for the main result presented next. The following theorem summarizes the convergence and feasibility guarantees of Algorithm~\ref{algo:dis_com}.

\begin{theorem}
\longthmtitle{Optimality and feasibility guarantees of  Algorithm~\ref{algo:dis_com}}
\label{thm:optimality}
Consider the optimization problem~\eqref{eq:sip_central} along with its data and suppose Assumption~\ref{asmp:graph} and~\ref{asmp:data} hold. Then for an arbitrary node $\inode\in\node$, Algorithm~\ref{algo:dis_com} satisfies :
\begin{enumerate}
\item[\namedlabel{thm1}{(\ref{thm:optimality}-a)}] For all outer iterations $\oc\in\fset{}$, the inner descent loop terminates within a maximum of $\maxstep{}{}=2\dvgradg^2\pbrack{\setdia\dvgradf+\frac{1}{\mingrad}}^2$ iterations.
\item[\namedlabel{thm2}{(\ref{thm:optimality}-b)}] For the estimate $\bdv{\threshold}{\inode}$, we have
\begin{align}
\label{eq:optimality}
\begin{split}
\objfunc{}{\bdv{\threshold}{\inode}} &- \objfunc{}{\dv{*}{}} \leq \frac{\errop\pbrack{\gphpar,\nodepar,\dvgradf,\mingrad,\dvgradg,\maxgrad,\setdia,\tnode,\sum_{\jnode\in\node}\norm{\dv{1}{\jnode}}}}{\sqrt{\threshold}},
\end{split}
\end{align}
where $\errop\pbrack{\gphpar,\nodepar,\dvgradf,\mingrad,\dvgradg,\maxgrad,\setdia,\tnode,\sum_{\jnode\in\node}\norm{\dv{1}{\jnode}}}$ is a constant depending on the problem data and assumptions.
\item[\namedlabel{thm3}{(\ref{thm:optimality}-c)}] For the estimate $\bdv{\threshold}{\inode}$ we have $\sup_{\uv{}{}\in\uvset{}} \confunc{}{\bdv{}{\inode},\uv{}{}} \leq \frac{2\setdia\ln{2}}{(2-\sqrt{2})\sqrt{\threshold}}$.
\end{enumerate}
\end{theorem}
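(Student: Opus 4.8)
The plan is to prove the three claims in order, leveraging the lemmas already in hand. For \ref{thm1}, I would start from the bound in Proposition~\ref{prop:maxstep}, namely $\maxstep{\inode}{\oc}\leq \frac{\dvgradg^2}{\errseq{\oc+1}^2}\pbrack{\lrate{\oc}\dvgradf+\frac{\errseq{\oc}}{\mingrad}}^2$, and substitute the prescribed sequences $\errseq{\oc}=\frac{1}{\sqrt{\oc}}$ and $\lrate{\oc}=\frac{\setdia}{\sqrt{\oc}}$. This gives $\maxstep{\inode}{\oc}\leq \dvgradg^2(\oc+1)\pbrack{\frac{\setdia\dvgradf}{\sqrt{\oc}}+\frac{1}{\mingrad\sqrt{\oc}}}^2 = \dvgradg^2\frac{\oc+1}{\oc}\pbrack{\setdia\dvgradf+\frac{1}{\mingrad}}^2$. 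For $\oc\in\fset{}$, i.e. $\oc\geq \grtint{\threshold/2}\geq 1$, the factor $\frac{\oc+1}{\oc}\leq 2$, yielding the stated uniform bound $\maxstep{}{}=2\dvgradg^2\pbrack{\setdia\dvgradf+\frac{1}{\mingrad}}^2$. This part is essentially a substitution.

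For \ref{thm2}, the core is Lemma~\ref{lem:sum_grad}: I would take $\dfset{}=\fset{}=\setdef{\grtint{\threshold/2},\dots,\threshold}$ and rewrite the inner product $\langle \gradf{\inode}{\cdv{\oc}{\inode}},\cdv{\oc}{\inode}-\dv{*}{}\rangle$ to bring in the objective suboptimality. Using convexity of each $\objfunc{\jnode}$ and the subgradient inequality, $\langle \gradf{\jnode}{\cdv{\oc}{\jnode}},\cdv{\oc}{\jnode}-\dv{*}{}\rangle \geq \objfunc{\jnode}{\cdv{\oc}{\jnode}}-\objfunc{\jnode}{\dv{*}{}}$; then I need to convert $\sum_{\jnode}\objfunc{\jnode}{\cdv{\oc}{\jnode}}$ at the \emph{different} local points into $\objfunc{}{}$ evaluated at a \emph{common} point (say the network average $\hdv{\oc}{}$ or the returned average $\bdv{\threshold}{\inode}$), incurring an error controlled by $\norm{\cdv{\oc}{\jnode}-\hdv{\oc}{}}$ times $\dvgradf$ — this is where the consensus bound $\conbound{\oc}$ from Lemma~\ref{lem:consensus} enters. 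Then one uses Jensen's inequality with the weights $\lrate{\oc}/\sum_{\oc\in\fset{}}\lrate{\oc}$ to pass the average inside $\objfunc{}{}$ and obtain $\objfunc{}{\bdv{\threshold}{\inode}}-\objfunc{}{\dv{*}{}}$ on the left. On the right, I divide the bound of Lemma~\ref{lem:sum_grad} by $\sum_{\oc\in\fset{}}\lrate{\oc}$; since $\lrate{\oc}=\setdia/\sqrt{\oc}$ and $\fset{}$ has roughly $\threshold/2$ terms with $\oc\leq\threshold$, $\sum_{\oc\in\fset{}}\lrate{\oc}\geq c\setdia\sqrt{\threshold}$ for an absolute constant $c$, while the numerator $\tnode\setdia + \tnode\maxgrad^2\sum_{\oc\in\fset{}}\pbrack{\lrate{\oc}^2+\dots}$ grows like $\log\threshold$ at worst — wait, $\sum 1/\oc$ over $\fset{}$ is $O(1)$ actually (it's $\sum_{\threshold/2}^{\threshold}1/\oc \leq \log 2$), so the numerator is $O(1)$ in $\threshold$, and the whole ratio is $O(1/\sqrt{\threshold})$. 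The consensus error terms $\sum_{\oc\in\fset{}}\lrate{\oc}\conbound{\oc}$ must also be shown to be $O(1)$, which follows because $\conbound{\oc}$ decays geometrically plus a $1/\sqrt{\oc}$ tail, so $\sum\lrate{\oc}\conbound{\oc}$ converges; dividing by $\sqrt{\threshold}$ gives the rate. Collecting all constants into $\errop$ finishes \ref{thm2}.

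For \ref{thm3}, the feasibility guarantee, I would argue that at the end of each outer iteration $\oc$ the estimate satisfies $\max_{\uv{}{}}\confunc{}{\dv{\oc+1}{\inode},\uv{}{}}\leq \errseq{\oc+1}=\frac{1}{\sqrt{\oc+1}}$ by the inner-loop termination condition (Step 8). Then $\bdv{\threshold}{\inode}=\frac{1}{\sum_{\oc\in\fset{}}\lrate{\oc}}\sum_{\oc\in\fset{}}\lrate{\oc}\dv{\oc+1}{\inode}$ is a convex combination, so by convexity of $\dv{}{}\mapsto\max_{\uv{}{}}\confunc{}{\dv{}{},\uv{}{}}$, $\sup_{\uv{}{}}\confunc{}{\bdv{\threshold}{\inode},\uv{}{}}\leq \frac{\sum_{\oc\in\fset{}}\lrate{\oc}\errseq{\oc+1}}{\sum_{\oc\in\fset{}}\lrate{\oc}} = \frac{\sum_{\oc\in\fset{}}\setdia/(\sqrt{\oc}\sqrt{\oc+1})}{\sum_{\oc\in\fset{}}\setdia/\sqrt{\oc}}$. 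For the numerator I bound $\frac{1}{\sqrt{\oc}\sqrt{\oc+1}}\leq \frac{1}{\oc}$ and use $\sum_{\oc=\grtint{\threshold/2}}^{\threshold}\frac{1}{\oc}\leq \ln 2$ (an integral estimate). For the denominator I bound $\sum_{\oc=\grtint{\threshold/2}}^{\threshold}\frac{1}{\sqrt{\oc}}\geq \frac{\threshold/2}{\sqrt{\threshold}}\cdot(\text{const})$; more carefully, $\sum_{\oc=\grtint{\threshold/2}}^{\threshold}\frac{1}{\sqrt{\oc}}\geq \int_{\threshold/2}^{\threshold}\frac{dt}{\sqrt{t}} = 2(\sqrt{\threshold}-\sqrt{\threshold/2}) = (2-\sqrt{2})\sqrt{\threshold}$. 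Dividing, the $\setdia$ cancels partially and we get exactly $\sup_{\uv{}{}}\confunc{}{\bdv{\threshold}{\inode},\uv{}{}}\leq \frac{\setdia\ln 2}{(2-\sqrt{2})\sqrt{\threshold}}$ — and allowing for off-by-one rounding in $\grtint{\threshold/2}$ one absorbs a factor $2$ to land on the stated $\frac{2\setdia\ln 2}{(2-\sqrt{2})\sqrt{\threshold}}$.

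The main obstacle I anticipate is \ref{thm2}: carefully handling the mismatch between local cumulative estimates $\cdv{\oc}{\jnode}$ (at which the subgradients are evaluated) and a single common point, so that the telescoping/Jensen argument produces $\objfunc{}{\bdv{\threshold}{\inode}}$ cleanly, while simultaneously checking that every consensus-induced error term, when weighted by $\lrate{\oc}$ and summed over $\fset{}$, stays $O(1)$ and hence contributes only at the $O(1/\sqrt{\threshold})$ level after normalization. The geometric-plus-tail structure of $\conbound{\oc}$ makes this summability true, but bookkeeping the constants (and confirming the harmonic-type sums over $\fset{}$ are genuinely bounded, not merely logarithmic in a way that would spoil the rate) is the delicate part.
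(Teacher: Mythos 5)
Your proposal is correct and follows essentially the same route as the paper: part (a) by substitution into Proposition~\ref{prop:maxstep} with $\frac{\oc+1}{\oc}\le 2$, part (b) by combining Lemma~\ref{lem:sum_grad} with the consensus bound of Lemma~\ref{lem:consensus} and Jensen's inequality (the paper organizes this as a split $\objfunc{\jnode}{\dv{\oc+1}{\inode}}-\objfunc{\jnode}{\dv{*}{}}=(\objfunc{\jnode}{\cdv{\oc}{\jnode}}-\objfunc{\jnode}{\dv{*}{}})+(\objfunc{\jnode}{\dv{\oc+1}{\inode}}-\objfunc{\jnode}{\cdv{\oc}{\jnode}})$, which is the same bookkeeping you describe in the opposite direction), and part (c) by convexity of $x\mapsto\max_{u}f(x,u)$ together with the bounds $\sum_{\oc\in\fset{}}\frac{1}{\oc}\le 2\ln 2$ and $\sum_{\oc\in\fset{}}\frac{1}{\sqrt{\oc}}\ge(2-\sqrt{2})\sqrt{\threshold}$. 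The summability observations you flag as the delicate part (the geometric-plus-$1/\sqrt{\oc}$ structure of $\conbound{\oc}$ and the $O(1)$ harmonic sums over $\fset{}$) are exactly the estimates the paper carries out explicitly.
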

\begin{proof}
 The proof follows by substituting the expressions for the sequence of stepsizes and feasibility tolerances into previously established results.  We divide the proof into three parts (i), (ii) and (iii), each addressing one respective claim of the result.
\begin{enumerate}
\item[\namedlabel{thmproof1}{(\romannumeral 1)}] By substituting the expressions for $\lrate{\oc}$ and $\errseq{\oc}$ in the lower bound for the number of inner iterations established in the Proposition~\ref{prop:maxstep} and using $1+1/\oc \le 2$ for any $k \ge 1$,  we get the required bound. 
\item[\namedlabel{thmproof2}{(\romannumeral 2)}] %
For the  objective function, we have
\begin{align*}
\objfunc{}{\bdv{\threshold}{\inode}} - \objfunc{}{\dv{*}{}} &= \objfunc{}{\frac{1}{\sum_{\oc\in\fset{}}\lrate{\oc}}\sum_{\oc\in\fset{}}\lrate{\oc}\dv{\oc+1}{\inode}} - \objfunc{}{\dv{*}{}} \\
&\overset{(a)}{\leq} \frac{1}{\sum_{\oc\in\fset{}}\lrate{\oc}}  \sum_{\oc\in\fset{}}\lrate{\oc} \pbrack{\objfunc{}{\dv{\oc+1}{\inode}} - \objfunc{}{\dv{*}{}}} \\
&= \frac{1}{\sum_{\oc\in\fset{}}\lrate{\oc}}  \sum_{\oc\in\fset{}}\lrate{\oc} \sum_{\jnode\in\node}\pbrack{\objfunc{\jnode}{\dv{\oc+1}{\inode}} - \objfunc{\jnode}{\dv{*}{}}} \\
&= \frac{1}{\sum_{\oc\in\fset{}}\lrate{\oc}}  \sum_{\oc\in\fset{}}\lrate{\oc} \sum_{\jnode\in\node}\pbrack{\objfunc{\jnode}{\cdv{\oc}{\jnode}} - \objfunc{\jnode}{\dv{*}{}}}\\
&\quad+\frac{1}{\sum_{\oc\in\fset{}}\lrate{\oc}}  \sum_{\oc\in\fset{}}\lrate{\oc} \sum_{\jnode\in\node}\pbrack{\objfunc{\jnode}{\dv{\oc+1}{\inode}} - \objfunc{\jnode}{\cdv{\oc}{\jnode}}} \\
&\overset{(b)}{\leq} \underbrace{\frac{1}{\sum_{\oc\in\fset{}}\lrate{\oc}}  \sum_{\oc\in\fset{}}\lrate{\oc} \sum_{\jnode\in\node}\langle \gradf{\jnode}{\cdv{\oc}{\jnode}}, \cdv{\oc}{\jnode} - \dv{*}{}\rangle}_{A}\\
&\quad+\underbrace{\frac{1}{\sum_{\oc\in\fset{}}\lrate{\oc}}  \sum_{\oc\in\fset{}}\lrate{\oc} \sum_{\jnode\in\node} \langle \gradf{\jnode}{\dv{\oc+1}{\inode}}, \dv{\oc+1}{\inode} - \cdv{\oc}{\jnode}\rangle}_{B},
\end{align*}  
where (a) and (b) are due to the convexity of $\objfunc{}$ and $\objfunc{\jnode}$ respectively. Using Lemma~\ref{lem:sum_grad} we can write
\begin{align*}
&A \leq \frac{\tnode}{\sum_{\oc\in\fset{}} \!  \lrate{\oc}}\left(\frac{ \setdia}{ 2} \! + \! \frac{\maxgrad^2}{2} \sum_{\oc\in\fset{}} \!  \! \pbrack{\lrate{\oc}^2 + \frac{\pbrack{\errseq{\oc}+\lrate{\oc}\dvgradf\mingrad}^2}{\mingrad^4}}\right)
\\
&\overset{(c)}{\leq} \frac{\tnode}{\sum_{\oc\in\fset{}}\lrate{\oc}}\left(\frac{ \setdia}{ 2}  \!  +  \!   \frac{\maxgrad^2}{2} \pbrack{\setdia^2 \!  + \! \frac{\pbrack{\errseq{\oc}+\lrate{\oc}\dvgradf\mingrad}^2}{\mingrad^4}}\sum_{\oc\in\fset{}}\!  \frac{1}{\oc} \right)\\
&\overset{(d)}{\leq} \frac{\tnode}{\sum_{\oc\in\fset{}}\lrate{\oc}}\pbrack{\frac{ \setdia}{ 2} + \maxgrad^2 \pbrack{\setdia^2 + \frac{\pbrack{\errseq{\oc}+\lrate{\oc}\dvgradf\mingrad}^2}{\mingrad^4}} \ln{2}},
\end{align*}
where, (c) follows by substituting $\lrate{\oc} = \frac{\setdia}{\sqrt{\oc}}$ and $\errseq{\oc} = \frac{1}{\sqrt{\oc}}$ and (d) is due to the inequality $\sum_{\oc\in\fset{}}\frac{1}{\oc} = \sum_{\oc=\grtint{\frac{\threshold}{2}}}^\threshold \frac{1}{\oc} \leq 2\ln{2}$. 
For $B$, we have
\begin{align*}
B &\overset{(e)}{\leq} \frac{1}{\sum_{\oc\in\fset{}}\lrate{\oc}}\sum_{\oc\in\fset{}} \lrate{\oc}\sum_{\jnode\in\node} \norm{\gradf{\jnode}{\dv{\oc+1}{\inode}}}_{*}\norm{\dv{\oc+1}{\inode} - \cdv{\oc}{\jnode}}\\
&\overset{(f)}{=} \frac{\dvgradf}{\sum_{\oc\in\fset{}}\lrate{\oc}}\sum_{\oc\in\fset{}} \lrate{\oc}\sum_{\jnode\in\node} \norm{\dv{\oc+1}{\inode} -\dv{\oc+1}{\jnode} +\dv{\oc+1}{\jnode}-\cdv{\oc}{\jnode}}
\\
&\overset{(g)}{\leq} \frac{\dvgradf}{\sum_{\oc\in\fset{}}\lrate{\oc}} \!  \sum_{\oc\in\fset{}}\lrate{\oc}  \!  \sum_{\jnode\in\node} \pbrack{\norm{\dv{\oc+1}{\inode}\!  - \! \dv{\oc+1}{\jnode}} \!  + \! \norm{\dv{\oc+1}{\jnode} - \cdv{\oc}{\jnode}}}
\\
&\overset{(h)}{\leq} \frac{2\dvgradf\tnode}{\sum_{\oc\in\fset{}}\lrate{\oc}} \! \underbrace{\sum_{\oc\in\fset{}}\lrate{\oc}\conbound{\oc}}_{C} \! + \! \frac{\dvgradf\tnode}{\sum_{\oc\in\fset{}}\lrate{\oc}} \underbrace{\sum_{\oc\in\fset{}}\pbrack{2\lrate{\oc}^2\dvgradf \! + \! \frac{\lrate{\oc}\errseq{\oc}}{\mingrad}}}_{D}, 
\end{align*}    
where (e) is due to Holder's inequality, (f) follows from the upper bound of the gradient, (g) follows via triangle inequality, and (h) is obtained by bounding the first term in (g) (consensus among nodes) using Lemma~\ref{lem:consensus} and the second term in (g) is bounded by $\norm{\zdv{\oc}{\jnode} - \cdv{\oc}{\jnode}}$+$\norm{\dv{\oc+1}{\jnode} - \zdv{\oc}{\jnode}}$, where $\norm{\zdv{\oc}{\jnode} - \cdv{\oc}{\jnode}}$ is upper bounded using Lemma~\ref{lem:loc_consensus}
and $\norm{\dv{\oc+1}{\jnode} - \zdv{\oc}{\jnode}}$ is bounded by the radius of the closed ball provided in Lemma~\ref{lem:ball}. Now we have 
\begin{align*}
C = \sum_{\oc\in\fset{}} \lrate{\oc} \conbound{\oc}&\overset{(i)}{=} \sum_{\oc\in\fset{}} \lrate{\oc} \biggl(\tnode\nodepar\sum_{s=1}^{\oc-1} \gphpar^{\oc-s-1}\pbrack{2\lrate{s} \dvgradf + \frac{\errseq{s}}{\mingrad}}\biggr. \\
&\hspace{10mm}\biggl.+ \nodepar \gphpar^{\oc-1} \sum_{\jnode\in\node} \norm{\dv{1}{\jnode}} + 2\pbrack{2\lrate{\oc} \dvgradf + \frac{\errseq{\oc}}{\mingrad}}\biggr) \\
&= \underbrace{\tnode\nodepar\sum_{\oc\in\fset{}} \lrate{\oc} \sum_{s=1}^{\oc-1} \gphpar^{\oc-s-1}\pbrack{2\lrate{s} \dvgradf + \frac{\errseq{s}}{\mingrad}}}_{E} \\
&\hspace{10mm}+\underbrace{ \nodepar\sum_{\oc\in\fset{}} \lrate{\oc}\gphpar^{\oc-1} \sum_{\jnode\in\node} \norm{\dv{1}{\jnode}}}_{F} + \underbrace{2\sum_{\oc\in\fset{}}\lrate{\oc}\pbrack{2\lrate{\oc} \dvgradf + \frac{\errseq{\oc}}{\mingrad}}}_{G}, \\
\end{align*}
where (i) is via Lemma~\ref{lem:consensus}. For the first term above, we have
\begin{align*}
E &\overset{(j)}{=} \tnode\nodepar\sum_{\oc\in\fset{}} \frac{\setdia}{\sqrt{\oc}} \sum_{s=1}^{\oc-1} \gphpar^{\oc-s-1}\pbrack{2\frac{\setdia}{\sqrt{s}} \dvgradf + \frac{1}{\sqrt{s}}\frac{1}{\mingrad}} \\
&= \tnode\setdia\nodepar\pbrack{2\setdia \dvgradf + \frac{1}{\mingrad}}\sum_{\oc\in\fset{}} \frac{1}{\sqrt{\oc}} \sum_{s=1}^{\oc-1} \gphpar^{\oc-s-1} \frac{1}{\sqrt{s}} \\
&= \tnode\setdia\nodepar\pbrack{2\setdia \dvgradf + \frac{1}{\mingrad}}\sum_{\oc=\grtint{\frac{\threshold}{2}}}^{\threshold} \sum_{s=1}^{\oc-1} \gphpar^{\oc-s-1} \frac{1}{\sqrt{\oc}} \frac{1}{\sqrt{s}} \\
&\overset{(k)}{\leq} \tnode\setdia\nodepar\pbrack{2\setdia \dvgradf + \frac{1}{\mingrad}}\sum_{\oc=\grtint{\frac{\threshold}{2}}}^{\threshold} \sum_{s=1}^{\oc-1} \frac{\gphpar^{\oc-s-1}}{s}\\
&\overset{(l)}{\leq}  \tnode\setdia\nodepar\pbrack{2\setdia \dvgradf + \frac{1}{\mingrad}}\sum_{s=1}^{\threshold-1}\frac{1}{s}\sum_{\oc=\max(\grtint{\frac{\threshold}{2}},s+1)}^{\threshold}  \gphpar^{\oc-s-1} \\
&\overset{(m)}{\leq}  \tnode\setdia\nodepar\pbrack{2\setdia \dvgradf + \frac{1}{\mingrad}}\sum_{s=1}^{\threshold-1}\frac{1}{s} \frac{\gphpar^{\max(\grtint{\frac{\threshold}{2}}-s-1,0)}}{1-\gphpar} \\
&=  \frac{\tnode\setdia\nodepar}{1-\gphpar}\pbrack{2\setdia \dvgradf + \frac{1}{\mingrad}}\pbrack{\sum_{s=1}^{\grtint{\frac{\threshold}{2}}-1}\frac{\gphpar^{\grtint{\frac{\threshold}{2}}-s-1}}{s}+\sum_{\grtint{\frac{\threshold}{2}}}^{\threshold-1}\frac{1}{s}}\\
&\overset{(n)}{\leq}  \frac{\tnode\setdia\nodepar}{1-\gphpar}\pbrack{2\setdia \dvgradf + \frac{1}{\mingrad}}\pbrack{\frac{1}{1-\gphpar}+2\ln{2}}, 
\end{align*}
where, (j) follows by substituting $\lrate{\oc}=\frac{\setdia}{\sqrt{\oc}}$, $\lrate{s}=\frac{\setdia}{\sqrt{s}}$ and $\errseq{s} = \frac{1}{\sqrt{s}}$, (k) follows because $\frac{1}{\sqrt{\oc}}\frac{1}{\sqrt{s}}\leq\frac{1}{s}$ for $\oc\geq s$, (l) follows via exchanging the summations, (m) follows by sum of geometric progression and the fact that $\gphpar\leq 1 $ and (n) follows from the inequalities $\sum_{s=1}^{\grtint{\frac{\threshold}{2}}-1}\frac{\gphpar^{\grtint{\frac{\threshold}{2}}-s-1}}{s}\leq\sum_{s=1}^{\grtint{\frac{\threshold}{2}}-1}\gphpar^{\grtint{\frac{\threshold}{2}}-s-1}\leq\frac{1}{1-\gphpar}$ and $\sum_{\grtint{\frac{\threshold}{2}}}^{\threshold-1}\frac{1}{s}\leq 2\ln{2}$. Similarly
\begin{align*}
F &=  \nodepar\sum_{\oc\in\fset{}} \lrate{\oc}\gphpar^{\oc-1} \sum_{\jnode\in\node} \norm{\dv{1}{\jnode}} \overset{(o)}{\leq} \nodepar\frac{\setdia}{1-\gphpar} \sum_{\jnode\in\node} \norm{\dv{1}{\jnode}},
\end{align*}
where, (o) is because $\sum_{\oc\in\fset{}} \lrate{\oc}\gphpar^{\oc-1}\leq\setdia\sum_{\oc\in\fset{}} \gphpar^{\oc-1}\leq\frac{\setdia}{1-\gphpar}$, where the last inequality is due to the infinite sum of a geometric progression. And,
\begin{align*}
G &= 2\sum_{\oc\in\fset{}}\lrate{\oc}\pbrack{2\lrate{\oc} \dvgradf + \frac{\errseq{\oc}}{\mingrad}} \\
&= 2\setdia\pbrack{2\setdia \dvgradf + \frac{1}{\mingrad}}\sum_{\oc\in\fset{}}\frac{1}{k} \\
&\leq 4\setdia\pbrack{2\setdia \dvgradf + \frac{1}{\mingrad}} \ln{2},
\end{align*}
where, the bound follow via substitution and upper bounding the sum as done previously. Using these we establish,
\begin{align*}
C&\leq \frac{\tnode\setdia\nodepar}{1-\gphpar}\pbrack{2\setdia \dvgradf + \frac{1}{\mingrad}}\pbrack{\frac{1}{1-\gphpar}+2\ln{2}} \\
&\hspace{20mm}+ \nodepar\frac{\setdia}{1-\gphpar} \sum_{\jnode\in\node} \norm{\dv{1}{\jnode}}+4\setdia\pbrack{2\setdia \dvgradf + \frac{1}{\mingrad}} \ln{2}
\end{align*}
Also, note that $D = G/2$, therefore
\begin{align*}
D \leq 2\pbrack{2\setdia^2\dvgradf+\frac{\setdia}{\mingrad}} \ln{2}
\end{align*}
By using the bounds for the terms A, B, C and D we get
\begin{align*}
 \objfunc{}{\bdv{\threshold}{\inode}} - \objfunc{}{\dv{*}{}} &\leq \frac{\tnode}{\sum_{\oc\in\fset{}}\lrate{\oc}}\pbrack{\frac{ \setdia}{ 2} + \maxgrad^2 \pbrack{\setdia^2 + \frac{\pbrack{\errseq{\oc}+\lrate{\oc}\dvgradf\mingrad}^2}{\mingrad^4}} \ln{2}}\\
&\hspace{5mm}+ \frac{2\dvgradf\tnode}{\sum_{\oc\in\fset{}}\lrate{\oc}}\frac{\tnode\setdia\nodepar}{1-\gphpar}\pbrack{2\setdia \dvgradf + \frac{1}{\mingrad}}\pbrack{\frac{1}{1-\gphpar}+2\ln{2}} \\
&\hspace{5mm}+ \frac{2\dvgradf\tnode}{\sum_{\oc\in\fset{}}\lrate{\oc}} \pbrack{\nodepar\frac{\setdia}{1-\gphpar} \sum_{\jnode\in\node} \norm{\dv{1}{\jnode}}+4\setdia\pbrack{2\setdia \dvgradf + \frac{1}{\mingrad}} \ln{2}}\\
&\hspace{5mm}+\frac{2\dvgradf\tnode}{\sum_{\oc\in\fset{}}\lrate{\oc}} \pbrack{2\setdia^2\dvgradf+\frac{\setdia}{\mingrad}} \ln{2}
\end{align*}
Now by substituting with the expression for $\lrate{\oc}$ and  using the lower bound $\sum_{\oc\in\fset{}}\frac{1}{\sqrt{k}}\geq (2-\sqrt{2})\sqrt{\threshold}$ in the above inequality and collecting all the constant term together as $\errop\pbrack{\gphpar,\nodepar,\dvgradf,\mingrad,\dvgradg,\maxgrad,\setdia,\tnode,\sum_{\jnode\in\node}\norm{\dv{1}{\jnode}}}$, we get the required bound.
\item[\namedlabel{thmproof3}{(\romannumeral 3)}] %
For the constraint function we have,
\begin{align*}
\max_{\uv{}{}\in\uvset{}} \confunc{}{\bdv{\threshold}{\inode},\uv{}{}} &= \max_{\uv{}{}\in\uvset{}} \confunc{}{\frac{1}{\sum_{\oc\in\fset{}}\lrate{\oc}}\sum_{\oc\in\fset{}}\lrate{\oc}\dv{\oc+1}{\inode},\uv{}{}}\\ 
&\overset{(p)}{\leq} \max_{\uv{}{}\in\uvset{}} \frac{1} {\sum_{\oc\in\fset{}}\lrate{\oc}}\sum_{\oc\in\fset{}}\lrate{\oc}\confunc{}{\dv{\oc+1}{\inode},\uv{}{}} \\
&= \frac{1} {\sum_{\oc\in\fset{}}\lrate{\oc}}\sum_{\oc\in\fset{}}\lrate{\oc} \max_{\uv{}{}\in\uvset{}} \confunc{}{\dv{\oc+1}{\inode},\uv{}{}} \\
&\overset{(q)}{\leq} \frac{1} {\sum_{\oc\in\fset{}}\lrate{\oc}}\sum_{\oc\in\fset{}}\lrate{\oc} \errseq{\oc+1}
\end{align*}
where (p) is due to the convexity of $\confunc{}$ in the first variable and (q) follows from the algorithm design which ensures that $\sup_{\uv{}{}\in\uvset{}} \confunc{}{\dv{\oc+1}{\inode},\uv{}{}}\leq \errseq{\oc+1}$. Now by substituting $\lrate{\oc} = \frac{\setdia}{\sqrt{\oc}}$ and $\errseq{\oc}=\frac{1}{\sqrt{\oc}}$, we have
\begin{align*}
\sup_{\uv{}{}\in\uvset{}} \confunc{}{\bdv{\threshold}{\inode},\uv{}{}} &\leq \frac{1} {\sum_{\oc\in\fset{}}\lrate{\oc}}\sum_{\oc\in\fset{}}\frac{\setdia}{\sqrt{\oc}} \frac{1}{\sqrt{\oc+1}}\leq \frac{\setdia} {\sum_{\oc\in\fset{}}\lrate{\oc}}\sum_{\oc\in\fset{}}\frac{1}{\oc}, 
\end{align*}
now, by using the inequalities: $\sum_{\oc\in\fset{}}\frac{1}{\sqrt{\oc}} \geq (2-\sqrt{2})\sqrt{\threshold}$ and $\sum_{\oc\in\fset{}}\frac{1}{\oc}\leq 2\ln{2}$, we get the required bound.
\end{enumerate} 
\end{proof}

The above results establish both optimality and feasibility guarantees for Algorithm~\ref{algo:dis_com}, and confirm that the algorithm terminates (i.e., the inner descent terminates finitely). Specifically, the suboptimality is of the order $\bigo{\frac{1}{\sqrt{\threshold}}}$ and so vanishes asymptotically with $K$. Similarly, the constraint violation also decreases with the number of outer iterations, with the worst-case infeasibility being of the same order, $\bigo{\frac{1}{\sqrt{\threshold}}}$. Note that, we have not established convergence to an optimizer in Theorem~\ref{thm:optimality}.

\section{Numerical experiments}
\label{sec:simulations}

In this section, we present numerical experiments to showcase the performance of Algorithm~\ref{algo:dis_com}. In Section~\ref{sec:num:optimization}, we consider a generic distributed optimization problem to illustrate the convergence properties of the algorithm. In Section~\ref{sec:num:control}, we consider a meta control control problem to demonstrate the application of the algorithm in a control setting.

\subsection{Example for the distributed semi-infinite optimization setup}
\label{sec:num:optimization}
Consider the following distributed optimization problem: 
\begin{equation}
\label{eq:sip_numerical}
\begin{aligned}
&\min_{\dv{}{0}, \dv{}{1}} &&\hspace{-8mm}\sum_{\inode=1}^{10} 0.1\pbrack{\dv{}{0}-a_{\inode}}^2+0.1\pbrack{\dv{}{1}-b_{\inode}}^2+\abs{\dv{}{0}+\dv{}{1}-4} -c_{\inode} \\
&\sbjto &&\hspace{-2mm}\begin{cases}
\dv{}{0},\dv{}{1}\in [-5,5], \\
d \dv{2}{0} + e \dv{}{1} -4 \leq 0, \\ \text{for all } d\in[0.5, 2.5] \text{ and } e\in[1, 3],
\end{cases}
\end{aligned}
\end{equation}
where, 
\begin{align*}
\{a_i\}_{\inode\in\until{10}} &= \setdef{-2, 3, -3, -5, -1, 0, 4, 2, -4, 1}, \\
\{b_i\}_{\inode\in\until{10}} &= \setdef{2, -2, 3, 5, 1, 0, -1, -3, 4, -4}, \\
\{c_i\}_{\inode\in\until{10}} &= \setdef{7, 3, 5, 1, 9, 11, 10, 14, 2.5, 12.5}.
\end{align*}
The objective function is distributed across ten different nodes on a network, and the optimizer at each node must satisfy the common semi-infinite constraint. Note that the objective function of the optimization problem~\eqref{eq:sip_numerical} is non-differentiable. We consider two network configurations: a directed line graph and a directed cycle. We use the method put forth in~\cite{SD-AA-AC-DC:22} to obtain the exact solution of this convex SIP (centralized); the optimal value is -33.3732 and the true optimizer is $x_0=0.53905$ and $x_1=1.09119$. It can be observed from Fig.\ \ref{fig:cycle_optimal} and Fig.\ \ref{fig:line_optimal} that, for both network configurations, the value of the objective function evaluated at the estimate that each node holds at a given iteration converges towards the actual optimal value as the number of iterations increases for both network configurations. This value after 20000 iterations across the nodes is
\begin{equation*}
\begin{aligned}
&(-33.3731,-33.3729,-33.3933,-33.3933,-33.3881,\\
&-33.3879, -33.3919, -33.3731,-33.3731, -33.3733),  
\end{aligned}
\end{equation*}
for nodes in the cycle configuration and
\begin{equation*}
\begin{aligned}
&(-33.3787, -33.3833, -33.3707, -33.3722, -33.3867, \\
&-33.3879, -33.3916, -33.3710, -33.3699, -33.3693),    
\end{aligned}
\end{equation*}
for nodes in the line configuration. The optimizer here is unique due to the strict convexity of the objective, and we plot the distance of the estimate at each node to this optimizer in Fig.\ \ref{fig:cycle_optimizer} and Fig.\ \ref{fig:line_optimizer}. Moreover, observe from Fig.\ \ref{fig:cycle_fes} and Fig.\ \ref{fig:line_fes} that the estimates at all nodes follow the feasibility tolerance sequence throughout the execution.
\begin{figure*}
\centering   
\begin{subfigure}[b]{0.4\textwidth}
\centering
\includegraphics[width=\linewidth]{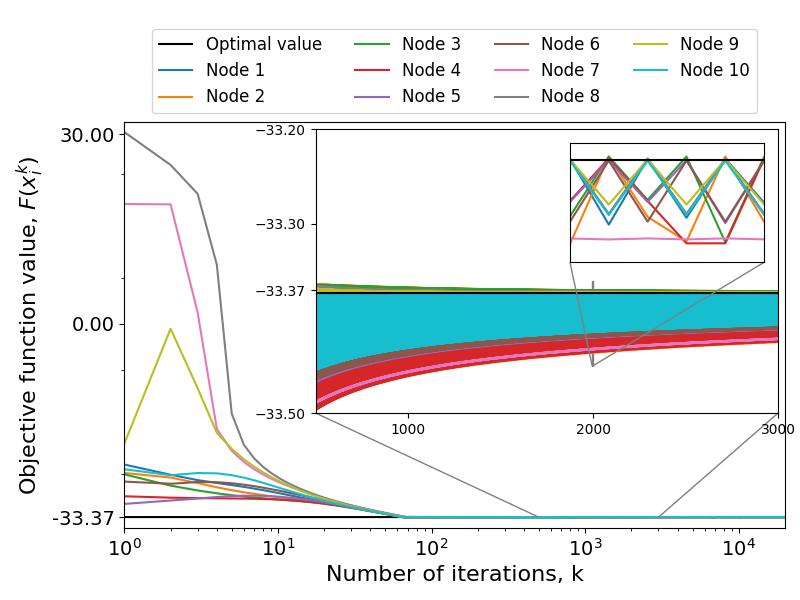}
\caption{{\footnotesize Objective function value - cycle configuration}}
\label{fig:cycle_optimal}
\end{subfigure}
\begin{subfigure}[b]{0.4\textwidth}
\centering
\includegraphics[width=\linewidth]{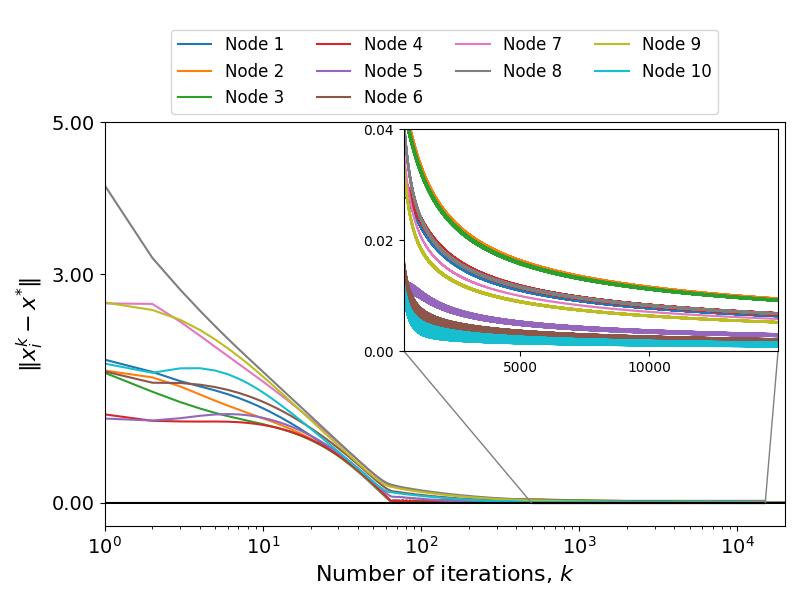}
\caption{{\footnotesize Error for the optimizer - cycle configuration}}
\label{fig:cycle_optimizer}
\end{subfigure}
\vskip\baselineskip
\begin{subfigure}[b]{0.4\textwidth}
\centering
\includegraphics[width=\linewidth]{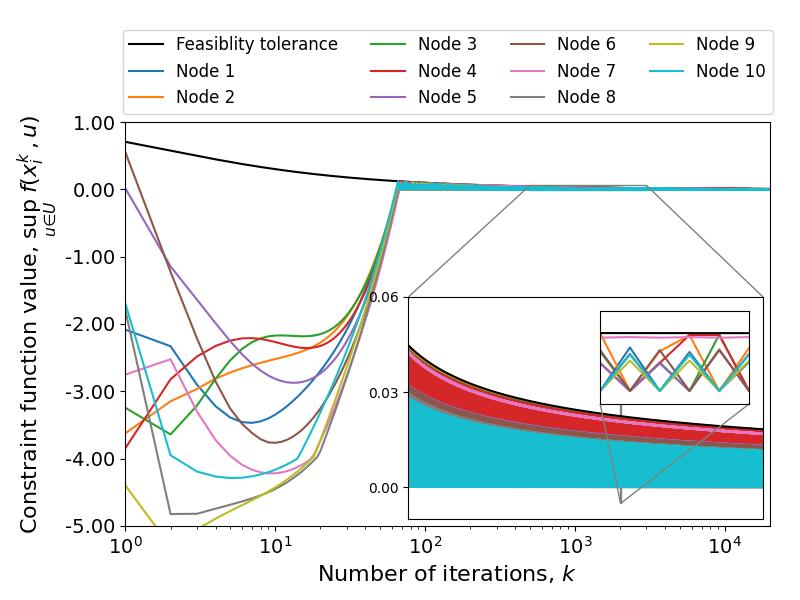}
\caption{{\footnotesize Feasibility of the estimate - cycle configuration}}
\label{fig:cycle_fes}
\end{subfigure}
\begin{subfigure}[b]{0.4\textwidth}
\centering
\includegraphics[width=\linewidth]{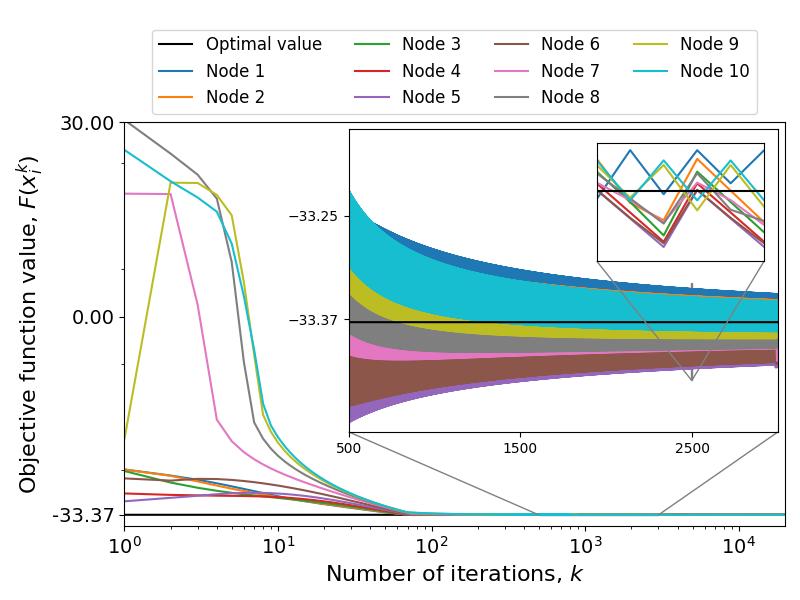}
\caption{{\footnotesize Objective function value - line configuration}}
\label{fig:line_optimal}
\end{subfigure}
\vskip\baselineskip
\begin{subfigure}[b]{0.4\textwidth}
\centering
\includegraphics[width=\linewidth]{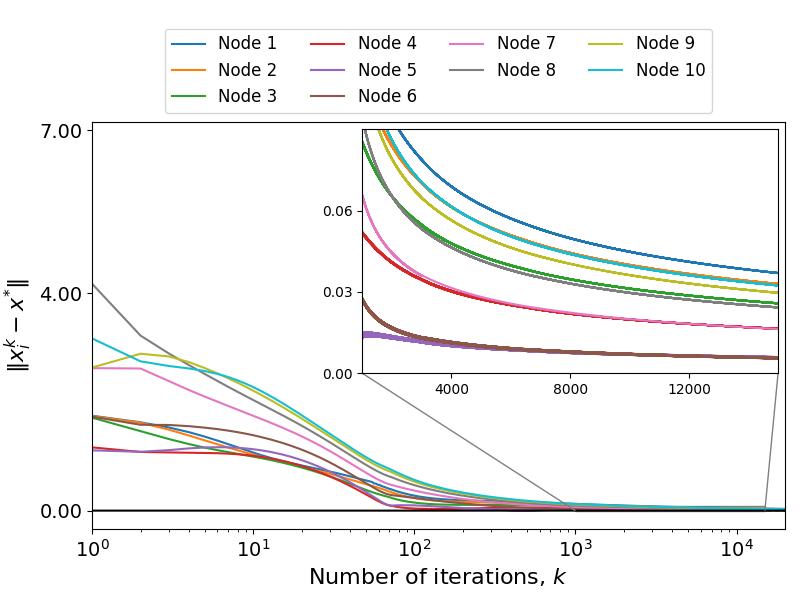}
\caption{{\footnotesize Error for the optimizer - line configuration}}
\label{fig:line_optimizer}
\end{subfigure}
\begin{subfigure}[b]{0.4\textwidth}
\centering
\includegraphics[width=\linewidth]{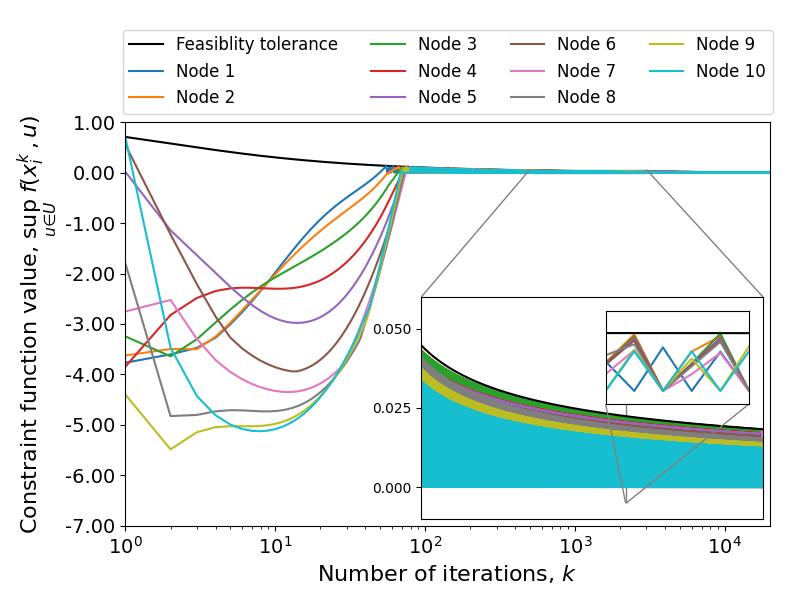}
\caption{{\footnotesize Feasibility of the estimate - line configuration}}
\label{fig:line_fes}
\end{subfigure}
\caption{Plots illustrating the application of Algorithm~\ref{algo:dis_com} on problem~\eqref{eq:sip_numerical}. 
 Panels (a), (b), and (c)  correspond to the cycle configuration of the network, and (d), (e), and (f) are for the line configuration. The evolution of the objective value at the estimates held by each node are displayed in (a) and (d). Similarly, the evolution of the error between the estimates and the optimizer is given in (b) and (e). Finally, the constraint violations at the estimates are shown in (c) and (f). These plots indicate that the algorithm reaches an optimizer asymptotically.}
\end{figure*}
\subsection{Comparison results}\label{sec:comparison}

We compared Algorithm~\ref{algo:dis_com} against two other  distributed methods that can handle semi-infinite constraints: the Distributed Cutting-Surface ADMM (DCSA) method~\cite{AC-AZ-GB-ARH:22}, and the Distributed Scenario-based Algorithm (DSA)\cite{KM-AF-SG-MP:18}. The comparison is conducted in the context of problem~\eqref{eq:sip_numerical}, where the network topology is the cycle digraph. In DCSA, each agent constructs progressively tighter outer approximations of the semi-infinite constraint by incrementally adding more constraints (termed cuts) sampled from the set $\mathbb{U}$, as the iterations progress. These approximations are then used in combination with a distributed ADMM method to solve the optimization problem~\eqref{eq:sip_central}. Increasing the number of cuts improves the feasibility and optimality guarantee of the obtained solution. On the other hand, DSC relies on a scenario-based approximation, where the constraints are sampled from $\mathbb{U}$ before the iterations start and remain fixed throughout the execution of the distribution algorithm. The distributed algorithm can be any method designed to solve  a deterministic distributed optimization problem. Naturally, the feasibility guarantee in this case depends on the number of initial samples and is therefore probabilistic in nature. The comparison provided here with our method focuses on numerical performance in terms of feasibility, optimality, and computational efficiency.

Fig.\ \ref{fig:comparison} shows the optimality and feasibility comparison. We plot the objective and constraint function values for the average of estimates $\widehat{x}^k$ defined in~\eqref{eq:avg-estimate} for Algorithm~\ref{algo:dis_com}, DCSA, and three instances of DSA with 50, 500, and 5000 scenarios distributed evenly among the ten nodes. While the DSA iterates initially converge faster toward optimality, DCSA and Algorithm~\ref{algo:dis_com} begin to show  comparable performance at around the $100^{\text{th}}$ iteration. As the iterations progress, both our method and DCSA outperform DSA and continue to approach the optimal value. This is consistent with the fact that both methods provably reach the optimal value asymptotically. The feasibility plot further supports this fact. Since the constraint samples in DSA are fixed from the start, constraint violations (if any) do not vanish as iterations progress; they only improve with the number of initially drawn samples. In contrast, both Algorithm~\ref{algo:dis_com} and DCSA reduce the constraint violation as iterations progress, achieving comparable levels of feasibility.

Fig.\ \ref{fig:time:comparison} compares the per-iteration computation time across all methods. Rather than plotting the raw per-iteration times, which exhibit significant variability, we report the cumulative average per-iteration time to obtain smoother and more interpretable plots. Let $\overline{T}_i^k$ be the time taken by node $i$ during the $\oc^{\text{th}}$ iteration of an approach. Then the per-iteration time is the average time taken by all nodes at the $\oc^{\text{th}}$ iteration, that is, $\overline{T}_{\mathrm{avg}}^k := V^{-1} \sum_{i \in [V]}  \overline{T}_i^k$.  The cumulative average per-iteration time at $\oc^{\text{th}}$ iteration  is computed as $\overline{T}^{\oc} := k^{-1} \sum_{s=1}^k \overline{T}_{\mathrm{avg}}^k$.

This running average provides a stable representation of computational effort per iteration across the distributed system. As shown in Fig.\ \ref{fig:time:comparison}, the per-iteration computation time for DCSA increases over the course of execution. This is due to the cumulative growth in the number of cutting planes, which leads to progressively larger subproblems. In contrast, the DSA methods exhibit nearly constant per-iteration times, as their computational complexity is determined by the fixed number of scenarios specified in advance. Algorithm~\ref{algo:dis_com} maintains a nearly constant and significantly lower per-iteration computation time throughout. This efficiency arises from the first-order nature of the method and the finite termination of its inner loop. Summarizing the above outlined results, we conclude that Algorithm~\ref{algo:dis_com} combines both features: asymptotic optimality as well as low per-iteration computation cost. 

\begin{figure*}
\centering   
\begin{subfigure}[b]{\textwidth}
\centering
\includegraphics[scale=0.42]{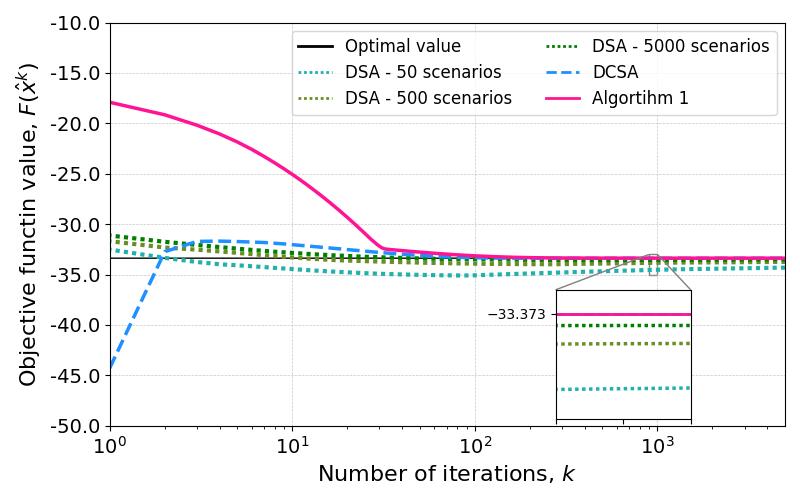}
\caption{{\footnotesize Objective function value - cycle configuration}}
\label{fig:dv:comparison}
\end{subfigure}
~\vskip\baselineskip
\begin{subfigure}[b]{\textwidth}
\centering
\includegraphics[scale=0.42]{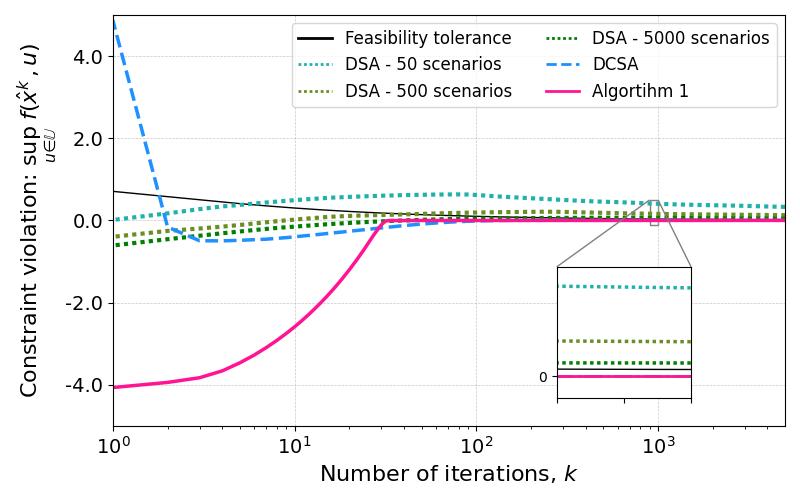}
\caption{{\footnotesize Feasibility of the estimates}}
\label{fig:fes:comparison} 
\end{subfigure}
\caption{Comparison of convergence behaviour of Algorithm~\ref{algo:dis_com}, Distributed Cutting-Surface ADMM~\cite{AC-AZ-GB-ARH:22} and the Distributed Scenario-based Algorithm (DSA) in~\cite{KM-AF-SG-MP:18}. Performance is evaluated based on both feasibility and optimality of the averaged estimate $\widehat{x}^k$ (i.e., the mean of local estimates across all nodes). Algorithm~\ref{algo:dis_com} and DCSA exhibit similar and strong performance, with trajectories that closely approach the optimal value and satisfy the feasibility tolerance. In contrast, the DSA methods with 50 and 500 scenarios show significant sub-optimality and infeasibility, highlighting the limitations of scenario-based methods when using too few samples. The DSA variant with 5000 scenarios improves in both aspects, yet still under performs slightly compared to Algorithm~\ref{algo:dis_com} and DCSA.}
\label{fig:comparison}
\end{figure*}
\begin{figure}
\centering
\includegraphics[width=0.7\linewidth]{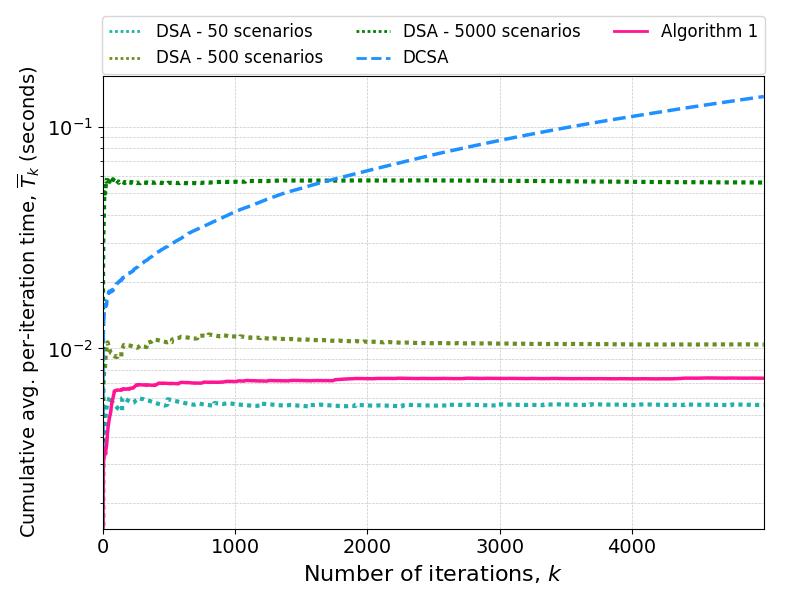}
\caption{Comparison of computational behaviour of Algorithm~\ref{algo:dis_com}, Distributed Cutting-Surface ADMM~\cite{AC-AZ-GB-ARH:22} and the Distributed Scenario-based Algorithm (DSA) in~\cite{KM-AF-SG-MP:18}. We plot the average time per iteration for each method, see Section~\ref{sec:comparison} for details. Algorithm~\ref{algo:dis_com} completes the full run (5000 iterations) in approximately 37 seconds per node on average, while the DCSA takes around 683 seconds per node and DSA with 5000 scenarios takes about 280 seconds per node. This highlights the significant computational advantage of Algorithm~\ref{algo:dis_com} in achieving comparable performance. Although DSA with 500 and 50 scenarios exhibits computation times similar to Algorithm~\ref{algo:dis_com}, the corresponding solution quality is noticeably lower (see Fig.\ \ref{fig:comparison}).}
\label{fig:time:comparison}
\end{figure}

\subsection{Robust meta control design}
\label{sec:num:control}
Consider the following parameter dependent discrete-time linear system:
\begin{equation}
\label{eq:sim:discon:dynamics}
    \sysdv^{t+1} = A(\sysnoi)\sysdv^{t} + B \sysin^{t}, \; t \in \{0\} \cup [99],
\end{equation}
where, for a time-step $t$, $\sysdv^t:=(\sysdv_{1}^t \; \sysdv_{2}^t)\in\R{2}{}$ is the state, $\sysin^t\in\R{}{}$ is the control input, and $\sysnoi\in\mathbb{W}:=[10,20]$ is an uncertain parameter, drawn i.i.d under uniform distribution on the set $\mathbb{W}$. The system matrices are of the form:
\begin{equation*}
A(\sysnoi)=\begin{bmatrix}
    1 & 0.01 \\ -0.01\sysnoi & 0.99
\end{bmatrix}, \quad B = \begin{bmatrix}
 0.0 \\ 0.01   
\end{bmatrix}.
\end{equation*}
We consider a scenario where four realizations of the above system are present at four different nodes of a network in cycle configuration. These realization are:
\begin{align*}
A_1&=\begin{bmatrix}
    1 & 0.01 \\ -0.1 & 0.99
\end{bmatrix},  &&\hspace{-25mm}A_2 = \begin{bmatrix}
    1 & 0.01 \\ -0.12 & 0.99
\end{bmatrix},
\\
A_3&=\begin{bmatrix}
    1 & 0.01 \\ -0.15 & 0.99
\end{bmatrix},  &&\hspace{-25mm}A_4=\begin{bmatrix}
    1 & 0.01 \\ -0.2 & 0.99
\end{bmatrix}.
\end{align*} 
Our objective is to determine a control sequence $\{\sysin^t\}_{t=0}^{99}$ that (a) minimizes the cumulative cost associated with these four realizations and (b) satisfies a terminal constraint when this control sequence is applied to any possible realization of the parameter dependent system~\eqref{eq:sim:discon:dynamics}. This formulation is motivated from meta learning frameworks~\cite{LFT-DZ-JA-HW:24,AA-MTT-CAU:24} for linear systems where the objective is to find control inputs that can be easily adapted to unseen realizations of an uncertain system. This objective is encoded in the control design as a terminal state constraint that needs to be satisfied for all possible realizations of the parametric uncertainty. This meta control design may be used to safely initialize a system with unknown dynamics and collect data before estimating the same.

Formally, the agents must agree on a common control sequence  $\{\sysin^t\}_{t=0}^{99}$ that minimizes the sum of their individual cost functions, given a prespecified common initial state $\sysdv^0 = \bar{\sysdv}$, while ensuring that the following semi-infinite constraint is satisfied by the terminal state:
\begin{align}\label{eq:sim:discon:con}
\norm{ A(\sysnoi)^{100}\sysdv^0+\sum_{t=0}^{99} A(\sysnoi)^{99-t}  B \sysin_{\inode}^{t}}^2 \le 1.5,  \text{for all }\sysnoi\in\mathbb{W}. 
\end{align}
The above description is captured in the following robust meta control problem:
\begin{align}
\label{eq:sim:discon:problem}
&\min_{(\sysin^t)_{t=0}^{99}} &&\sum_{\inode\in\until{4}}\left(\langle \sysdv_{\inode}^{100}, P_{\inode} \sysdv_{\inode}^{100} \rangle + \sum_{t=0}^{99} \langle \sysdv_{\inode}^{t}, Q_{\inode} \sysdv_{\inode}^{t} \rangle + \langle {\sysin^t}, R_{\inode} \sysin^{t} \rangle \right)\nonumber\\
& \, \, \text{s.t.} &&\begin{cases}
\sysdv_i^{t+1} = A_i\sysdv_i^{t} + B \sysin^{t}, \; t \in \{0\} \cup [99],
\\
\sysdv_{\inode}^{0} = (0.5\; 0), \;\text{for all }i\in\until{4},
\\
\text{the constraint~\eqref{eq:sim:discon:con}},
\end{cases} 
\end{align}
where the cost matrices are given by:
\begin{align*}
 (Q_{1},P_{1},R_{1}) &= \left(\begin{bmatrix}
  1.0 & 0.0 \\ 0.0 & 2.0   
 \end{bmatrix}, \begin{bmatrix}
1.0 & 0.0 \\ 0.0 & 2.0
 \end{bmatrix}, \begin{bmatrix}
0.1
 \end{bmatrix}\right), \\  
 (Q_{2},P_{2},R_{2}) &= \left(\begin{bmatrix}
  2.0 & 0.0 \\ 0.0 & 1.5   
 \end{bmatrix}, \begin{bmatrix}
2.0 & 0.0 \\ 0.0 & 1.5
 \end{bmatrix}, \begin{bmatrix}
1.0
 \end{bmatrix}\right), \\  
 (Q_{3},P_{3},R_{3}) &= \left(\begin{bmatrix}
  1.5 & 0.0 \\ 0.0 & 2.0   
 \end{bmatrix}, \begin{bmatrix}
1.5 & 0.0 \\ 0.0 & 2.0
 \end{bmatrix}, \begin{bmatrix}
1.0
 \end{bmatrix}\right), \\  
 (Q_{4},P_{4},R_{4}) &= \left(\begin{bmatrix}
  1.0 & 0.0 \\ 0.0 & 1.0   
 \end{bmatrix}, \begin{bmatrix}
1.0 & 0.0 \\ 0.0 & 1.0
 \end{bmatrix}, \begin{bmatrix}
0.1
 \end{bmatrix}\right).
\end{align*}
We used Algorithm~\ref{algo:dis_com} to solve the robust meta control problem~\eqref{eq:sim:discon:problem}. The meta control sequence obtained is depicted is depicted in Fig.\ \ref{fig:discon:cntrl}. Using this control sequence we plot in Fig.\ \ref{fig:discon:traj} 1000 state trajectories corresponding to the dynamics~\eqref{eq:sim:discon:dynamics}for 1000 different realizations of the uncertain parameter $\sysnoi$ (generated by uniformly randomly sampling on the set $\mathbb{W}$). As illustrated in Fig.\ \ref{fig:discon:control}, the robust meta control sequence successfully ensures constraint satisfaction across all realizations, demonstrating its effectiveness even on previously unseen instances of the system.

\begin{figure*}[ht]
\centering
\begin{subfigure}[b]{0.8\textwidth}
\includegraphics[width=\linewidth]{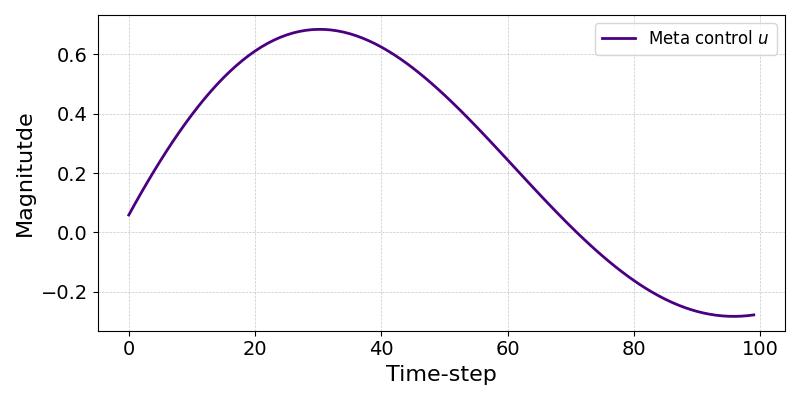} 
\caption{}
\label{fig:discon:cntrl}
\end{subfigure}
\vskip\baselineskip
\begin{subfigure}[b]{0.8\textwidth}
\includegraphics[width=\linewidth]{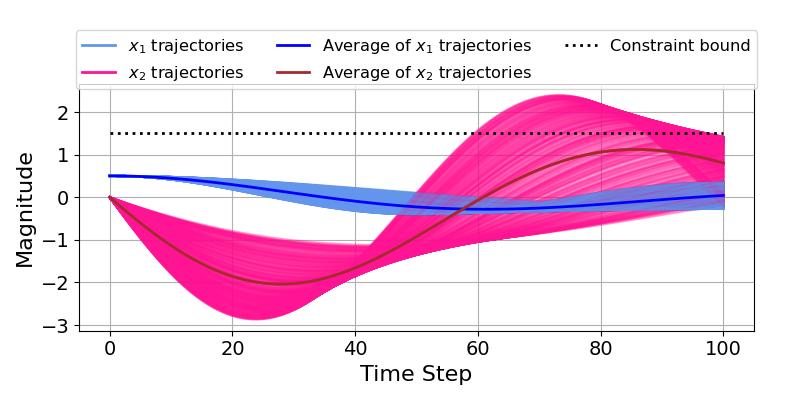} 
\caption{}
\label{fig:discon:traj}
\end{subfigure}
\caption{In Fig.\ \ref{fig:discon:cntrl}, we plot the robust meta control sequence $(\sysin^t)_{t=0}^{99}$ obtained by solving~\eqref{eq:sim:discon:problem} using Algorithm~\ref{algo:dis_com}. Fig.\ \ref{fig:discon:traj} shows 1000 state trajectories generated by applying this control sequence to the parameter-dependent system~\eqref{eq:sim:discon:dynamics}, with each trajectory corresponding to a random realization of the uncertain parameter. Notably, the control sequence satisfies the terminal state constraint~\eqref{eq:sim:discon:con} for all realizations, highlighting its robustness.}
\label{fig:discon:control}
\end{figure*}

\section{Conclusion}
\label{sec:conclusion}

We established a distributed algorithm for solving a convex semi-infinite program over a time-varying network. Algorithm~\ref{algo:dis_com} developed here is a first-order distributed method based on CoMirror descent for constrained optimization. We established its convergence guarantee, showing that the network achieves consensus and the optimal value and constraint violation for the estimates held by each node decay asymptotically to zero. We also established the  rate of this decay. The numerical experiments corroborate the algorithm's performance guarantees. In future work, we plan to extend our approach to distributed problems where the constraint functions are heterogeneous across nodes and they change with time. Moreover, we aim to refine the convergence rate for special class of objective functions.

\bibliographystyle{IEEEtran}
\bibliography{refs.bib}
\end{document}